\numberwithin{equation}{section}
\newtheorem{theorem}{Theorem}[section]
\newaliascnt{lemma}{theorem}
\newtheorem{lemma}[lemma]{Lemma}
\newaliascnt{proposition}{theorem}
\newtheorem{proposition}[proposition]{Proposition}
\newaliascnt{corollary}{theorem}
\newtheorem{corollary}[corollary]{Corollary}
\newaliascnt{conjecture}{theorem}
\newaliascnt{remark}{theorem}
\newaliascnt{definition}{theorem}
\def\tagform@#1{\maketag@@@{\ignorespaces#1\unskip\@@italiccorr}}
\let\orgtheequation\theequation
\def\theequation{(\orgtheequation)}
\def\equationautorefname~{}
\newcommand{\mref}[1]{%
\href{http://www.ams.org/mathscinet-getitem?mr=#1}{#1}}
\newcommand{\arxiv}[1]{%
\href{http://front.math.ucdavis.edu/#1}{ArXiv:#1}}
\newcommand{\Real}{{\mathbb{R}}}
\newcommand{\R}{\mathbb{R}}
\newcommand{\Rd}{{\Real^d}}
\newcommand{\Rdz}{{\Real^d_0}}
\newcommand{\Rdd}{{\Real^{d-1}}}
\newcommand{\Sd}{{\mathbb{S}^{d-1}}}
\newcommand{\Sp}{\mathcal{S}}
\newcommand{\Sdd}{{\mathbb{S}^{d-2}}}
\newcommand{\lap}{{\Delta^{\alpha/2}}}
\newcommand{\lapS}{{\Delta^{\alpha/2}_\Sd}}
\newcommand{\one}{{\bf1}}
\newcommand{\cda}{C_{d-1,\alpha}}
\newcommand{\Cda}{C_{d,\alpha}}
\newcommand{\Cdmja}{C_{d-1,\alpha}}
\newcommand{\Ad}{{\mathcal{A}_{d,\alpha}}}
\newcommand{\limeps}{\lim_{\epsilon\to 0}}
\newcommand{\ps}[2]{#1{\cdot}#2}
\DeclareMathOperator{\dist}{dist}
\newcommand{\BigO}{O}
\newcommand{\ty}{{\widetilde{y}}}
\newcommand{\tx}{{\widetilde{x}}}
\newcommand{\tys}{{\widetilde{y^*}}}
\newcommand{\tz}{{\widetilde{z}}}
\newcommand{\tzs}{{\widetilde{z^*}}}
\newcommand{\ve}{{\varepsilon}}
\definecolor{bs}{RGB}{128,0,0}
\definecolor{kb}{RGB}{0,128,0}
\definecolor{as}{RGB}{150,150,10}
\begin{document}

\title{Martin kernel for fractional Laplacian in narrow cones\footnote{MSC2010: 31B25, 45K05 (primary); 60J45 (secondary). Key words and phrases: fractional Laplacian, Martin kernel, circular cone.}}
\author{Krzysztof Bogdan\footnote{Institute of Mathematics and Computer Science, Wroc{\l}aw University of Technology, Poland, email: bogdan@pwr.wroc.pl, this author was partially supported by NCN grant 2012/07/B/ST1/03356}, Bart{\l}omiej Siudeja\footnote{Department of Mathematics, University of Oregon, Eugene, OR 97403, USA, email: siudeja@uoregon.edu, this author was partially supported by NCN grant 2012/07/B/ST1/03356}, Andrzej St\'os\footnote{Laboratoire de Math\'ematiques, Clermont Universit\'e,
Universit\'e Blaise Pascal and CNRS UMR 6620,
BP 80026, 63171 Aubi\`ere, France, email: stos@math.univ-bpclermont.fr}
}

\maketitle
\abstract{
We give a power law for
the homogeneity degree of the Martin kernel of the fractional Laplacian for the right circular cone 
when the angle of the cone tends to zero.}

\section{Introduction and main result}

For $d\geq 2$ and $0<\Theta< \pi$, we consider the right circular cone of angle $\Theta$ (aperture $2\Theta$):
\begin{gather}\label{gtheta1}
\Gamma_\Theta=\left\{ x=(x_1,\ldots,x_d)\in\Rd:\;x_d>|x|\cos\Theta \right\}.
\end{gather}
The Martin kernel of the fractional Laplacian $\Delta^{\alpha/2}$, $0<\alpha<2$, for $\Gamma_\Theta$
is the unique continuous function $M\geq 0$ on $\Rd$, such that $M$ is smooth on $\Gamma_\Theta$, $\Delta^{\alpha/2}M=0$ on $\Gamma_\Theta$,
$M=0$ on $\Gamma_\Theta^c$, and $M(1,0,\ldots,0)=1$.
It is known that $M$ is $\beta$-homogeneous:
  \begin{gather*}
    M(x)=|x|^\beta M(x/|x|),\qquad x\in \Rdz,
  \end{gather*}
where $\Rdz=\Rd\setminus \{0\}$ and $\beta=\beta(d, \alpha,\Theta)\in (0,\alpha)$. For instance, $\beta=\alpha/2$ for the half-space ($\Theta=\pi/2$). These facts are given in \cite[Theorem 3.2]{MR2075671}, see also \cite[Theorem 3.9]{MR1671973}. 
The homogeneity degree $\beta$ is crucial 
for precise asymptotics of nonnegative harmonic functions of $\Delta^{\alpha/2}$ in cones.
In fact, the Martin, Green and heat kernels of $\Delta^{\alpha/2}$ for $\Gamma_\Theta$ 
enjoy explicit elementary estimates in terms of $\beta$ 
\cite[Lemma 3.3]{MR2213639}, \cite[(23)]{MR2602155}.
By domain monotonicity of the Green function
and by the boundary Harnack principle \cite{MR2365478}, the knowledge of $\beta$ has  important consequences for the potential theory of $\Delta^{\alpha/2}$ with Dirichlet boundary conditions
in 
general Lipschitz domains, see, e.g. \cite[Theorem~5.2]{MR2114264}.
Furthermore,
$\beta$
determines the critical moment $p_0=\beta/\alpha$ of integrability of the first exit time of the isotropic 
$\alpha$-stable L\'evy process $\{X_t,\,t\geq 0\}$ in $\Rd$ from $\Gamma_\Theta$ \cite[Lemma 4.2]{MR2075671}, which
is a long-standing motivation to study $\beta$, cf. \cite{MR0474525}, \cite{MR1750907}, \cite{MR2075671}, \cite{MR1062058}, \cite{MR1936081}.
The connections of $\Delta^{\alpha/2}$ to the isotropic $\alpha$-stable L\'evy process in $\Rd$ are well-known and can be found in the references; below we focus on analytic construction of superharmonic functions of $\Delta^{\alpha/2}$ in $\Gamma_\Theta$.

From \cite{MR2213639}, \cite{MR2075671} and \cite{MR1750907}, $\beta=\beta(d, \alpha,\Theta)$ is strictly decreasing in $\Theta$, and $\beta\to\alpha$ as $\Theta\to 0$. 
This contrasts 
with the
case of 
the classical Laplacian, e.g.
the Martin kernel for  the Laplacian and planar sector with aperture $2\Theta\in (0,2\pi)$ has homogeneity degree equal to $\pi/(2\Theta)$, which is arbitrarily large for narrow enough cones (see \cite{MR0474525} for higher dimensions). 
The problem of giving a more quantitative  description of $\beta$ remained a puzzle for
over a decade, since \cite{MR2075671, MR1750907}.
In this work we prove a power law for $\beta(d, \alpha,\Theta)$ as $\Theta\to 0$. 
Namely, let $0<\alpha<2$, 
\begin{align}\label{omega}
    \omega(\Theta)&=\begin{cases}
      \Theta^\alpha& \mbox{ if } 0<\alpha<1, \\
      \Theta |\log \Theta| & \mbox{ if } \alpha=1,\\
      \Theta& \mbox{ if } \alpha>1,
    \end{cases}
  \end{align}
and
\begin{align}\label{eq:defs} 
B_{d,\alpha}
  &= 
\frac{\Gamma\left(\frac{d+\alpha}{2} \right)}{\pi^{3/2}\Gamma\left(\frac{d-1+\alpha}{2} \right)}\sin\left( \frac{\pi\alpha}{2} \right)B\left(1+\frac{\alpha}{2},\frac{d-1}{2}\right),
\end{align}
where $\Gamma$ and $B$ are the Euler gamma and beta functions, respectively.
For asymptotic results,  we shall often use Landau's $\BigO$ notation. Here is our main theorem.
\begin{theorem}\label{mainthm}
If $\Theta\to 0$, then
 $
\beta(d,\alpha,\Theta)=\alpha-
\Theta^{d-1+\alpha}\big[
B_{d,\alpha}
+\BigO(\omega(\Theta))\big].
$
\end{theorem}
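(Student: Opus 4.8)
The plan is to bracket $\beta(d,\alpha,\Theta)$ between homogeneity degrees of explicit super- and subharmonic functions in $\Gamma_\Theta$. I will use the following standard form of the comparison principle, the one behind the monotonicity of $\beta$ in $\Theta$: if $w\ge 0$ is $\gamma$-homogeneous, vanishes on $\Gamma_\Theta^c$, is $C^{1,1}$ and not identically $0$ in $\Gamma_\Theta$, and $\Lap w\le 0$ in $\Gamma_\Theta$, then $\gamma\le\beta(d,\alpha,\Theta)$; if instead $\Lap w\ge 0$ in $\Gamma_\Theta$, then $\gamma\ge\beta(d,\alpha,\Theta)$. (One compares $w$ with suitable multiples of $M$ and of its Kelvin transform on the truncated cones $\Gamma_\Theta\cap\{1<|x|<R\}$, using the $\beta$-homogeneity of $M$, and lets $R\to\infty$.) Thus it suffices to exhibit, for $\gamma=\alpha-\Theta^{d-1+\alpha}\big(B_{d,\alpha}\pm C\,\omega(\Theta)\big)$ with $C$ a sufficiently large constant independent of $\Theta$, a $\gamma$-homogeneous barrier of the appropriate sign.

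The barrier I propose is the cylindrical profile
\[
v_\gamma(x)=x_d^{\,\gamma-\alpha}\big(x_d^2\tan^2\Theta-x_1^2-\cdots-x_{d-1}^2\big)_+^{\alpha/2}=x_d^{\,\gamma}\big(\tan^2\Theta-\tan^2\theta\big)_+^{\alpha/2},
\]
where $\theta$ is the polar angle from the axis of $\Gamma_\Theta$. It is nonnegative, $\gamma$-homogeneous, continuous, and supported exactly on $\overline{\Gamma_\Theta}$, and it is built from the two ingredients that are \emph{exactly} correct in the limit $\Theta\to 0$. First, the transverse slice $z\mapsto(R^2-|z|^2)_+^{\alpha/2}$, $z\in\R^{d-1}$, is, up to a constant, the $\alpha$-harmonic profile of the ball $B_R\subset\R^{d-1}$, via the classical identity $(-\Delta_{\R^{d-1}})^{\alpha/2}(R^2-|z|^2)_+^{\alpha/2}\equiv K_{d-1,\alpha}:=2^\alpha\,\Gamma(1+\tfrac{\alpha}{2})\,\Gamma(\tfrac{d-1+\alpha}{2})/\Gamma(\tfrac{d-1}{2})$ on $B_R$, a constant \emph{independent of $R$} with no boundary blow-up. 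Second, the axial weight $x_d^{\gamma-\alpha}$ (equivalently the overall factor $x_d^\gamma$ once $x_d^\alpha$ is absorbed) is what enforces $\gamma$-homogeneity and governs the behaviour at the vertex and at infinity.

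Next I would compute $\Lap v_\gamma$ inside $\Gamma_\Theta$; by homogeneity this reduces to evaluating it at the axial point $e_d=(0,\ldots,0,1)$ (and checking the sign is unchanged over the whole unit cap). Splitting the defining singular integral over jump vectors $y=(\tilde y,y_d)$ into a transverse, a far, and an intermediate zone: in the transverse zone ($|\tilde y|\lesssim\tan\Theta$, $|y_d|$ small) the $x_d$-dependence of $v_\gamma$ is negligible and the $d$-dimensional kernel integrates in $y_d$ to the $(d-1)$-dimensional one — the constants matching because $C_{d,\alpha}\int_{\R}(1+s^2)^{-(d+\alpha)/2}\,ds=C_{d-1,\alpha}$ — so this piece equals $-K_{d-1,\alpha}$ up to errors; in the far zone (where $e_d\pm y$ lies deep inside $\Gamma_\Theta$) one has $v_\gamma(e_d\pm y)\sim|y|^{\gamma}$ integrated against $|y|^{-d-\alpha}$ over the narrow translated cone, and carrying out the transverse integral first produces the profile mass $V_{d-1,\alpha}:=\int_{B_1\subset\R^{d-1}}(1-|w|^2)^{\alpha/2}\,dw=\pi^{(d-1)/2}\,\Gamma(1+\tfrac{\alpha}{2})/\Gamma(\tfrac{d+1+\alpha}{2})$ times the convergent radial integral $\int_1^\infty s^{\gamma-1-\alpha}\,ds=(\alpha-\gamma)^{-1}\big(1+\BigO(\alpha-\gamma)\big)$, giving the piece $+\,C_{d,\alpha}\,V_{d-1,\alpha}\,\tan^{d-1+\alpha}\Theta/(\alpha-\gamma)$; the intermediate zone is lower order. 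Setting the leading part of $\Lap v_\gamma(e_d)$ to zero balances the two pieces:
\[
\alpha-\gamma=\tan^{d-1+\alpha}\Theta\;\frac{C_{d,\alpha}\,V_{d-1,\alpha}}{K_{d-1,\alpha}}\big(1+\littleo(1)\big),
\]
and a direct computation with $C_{d,\alpha}=2^\alpha\,\Gamma(\tfrac{d+\alpha}{2})/(\pi^{d/2}\,|\Gamma(-\tfrac{\alpha}{2})|)$, the reflection formula (which is where the factor $\sin(\pi\alpha/2)$ enters), and Beta–Gamma identities checks that $C_{d,\alpha}V_{d-1,\alpha}/K_{d-1,\alpha}$ is exactly the constant $B_{d,\alpha}$ of the statement. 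In particular $\Lap v_\gamma(e_d)>0$ when $\alpha-\gamma$ is a bit smaller than this value and $<0$ when it is a bit larger, which together with the comparison principle pins $\beta$ near $\alpha-\Theta^{d-1+\alpha}B_{d,\alpha}$.

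The main obstacle is upgrading this leading-order balance to the stated two-sided estimate with error $\BigO(\omega(\Theta))$, uniformly as $\Theta\to 0$. Two things must be controlled. (i) One needs genuine sub/super-solutions: $\Lap v_\gamma$ must keep a fixed sign throughout $\Gamma_\Theta$, including near the lateral boundary — where the $(\cdot)_+^{\alpha/2}$ profile keeps $\Lap v_\gamma$ bounded but whose sign must still be checked — and near the axis; a small angular correction of $v_\gamma$ may be needed, or it suffices to take $C$ large enough to absorb the error into a margin. (ii) One must estimate, uniformly in $\Theta$, all discrepancies entering $\Lap v_\gamma(e_d)$ beyond the two leading pieces: the non-factoring of the $d$-dimensional kernel for $\alpha<2$ (cross terms between the transverse and axial directions), the curvature of the spherical cap relative to the flat ball, the non-vanishing value $v_\gamma(e_d)=\tan^\alpha\Theta$ appearing where the limit problem has a zero, the asymmetry of $\Gamma_\Theta$ near its lateral surface (a point inside at one height may be outside at a nearby height), and the mismatch between $v_\gamma$ and the true angular eigenfunction. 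The expectation is that all of these shift $\alpha-\gamma$ by $\BigO\!\big(\Theta^{d-1+\alpha}\omega(\Theta)\big)$, the three regimes of $\omega$ arising from a single borderline integral of the type $\int_\Theta^{1}t^{\alpha-2}\,dt$ (times a prefactor of order $\Theta$ from the thin boundary layer along the lateral surface): convergent as $\Theta\to0$ for $\alpha<1$, giving error $\Theta^\alpha$; logarithmically divergent for $\alpha=1$, giving error $\Theta|\log\Theta|$; and controlled by its upper endpoint for $\alpha>1$, giving error $\Theta$. Granting (i) and (ii), taking $\gamma=\alpha-\Theta^{d-1+\alpha}\big(B_{d,\alpha}+C\omega(\Theta)\big)$ produces a superharmonic barrier, hence $\beta\ge\gamma$, while $\gamma=\alpha-\Theta^{d-1+\alpha}\big(B_{d,\alpha}-C\omega(\Theta)\big)$ produces a subharmonic barrier, hence $\beta\le\gamma$; combining the two yields the theorem.
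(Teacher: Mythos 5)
Your plan has the same skeleton as the paper's argument, and your leading constant is exactly right: the transverse profile $(R^{2}-|z|^{2})_{+}^{\alpha/2}$ (the exit-time function of the $(d-1)$-dimensional ball, whose fractional Laplacian is constant up to the boundary with no blow-up) is precisely the ingredient the paper uses, the balance you set up between a local contribution of size $\BigO(1)$ and a far contribution of size $\Theta^{d-1+\alpha}/(\alpha-\gamma)$ is the paper's balance between $\lapS\phi=-1+\BigO(\omega(\Theta))$ and $R_\lambda\phi\approx\Ad\,(\alpha-\lambda)^{-1}\int\phi\,d\sigma$, and one checks that your ratio $C_{d,\alpha}V_{d-1,\alpha}/K_{d-1,\alpha}$ equals $\Ad\,\cda\,V_{d-1,\alpha}=\Ad\widetilde C_{d,\alpha}=B_{d,\alpha}$. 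The implementation differs: rather than working with your conical barrier $v_\gamma$ directly, the paper Kelvin-transforms the shifted cone onto a spindle inscribed in the cylinder $\Pi_\varepsilon$, multiplies the exit-time function by the Riesz kernel $h$, and transports back; this reduces everything to the flat cylinder where $\lap s_\varepsilon=-1$ holds exactly, and is precisely the device that makes your step (i) tractable.

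As written, though, the proposal is a plan rather than a proof: the two items you defer under ``granting (i) and (ii)'' are where essentially all of the mathematical content lies, and neither is routine. First, the sign of $\Lap v_\gamma$ must be controlled at \emph{every} point of $\Gamma_\Theta$, uniformly as $\Theta\to0$; the discrepancy between the cone and its osculating cylinder concentrates in a thin layer along the lateral boundary, where the profile is only H\"older of order $\alpha/2$ and the jump kernel samples both sides of the boundary. Showing that this discrepancy contributes only $\BigO(\omega(\Theta))$ uniformly is the bulk of the paper's technical section (the decomposition into the regions $A$, $B$, $D$, $E$ in the analysis of $\lap[h(s_\varepsilon^*-s_\varepsilon)]$), and ``take $C$ large enough'' is not by itself an argument there. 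Second, your far-zone computation treats the radial integral as if the kernel factorized into a transverse mass times $\int_1^\infty s^{\gamma-1-\alpha}\,ds$; in fact the relevant kernel $u_\lambda(t)-u_0(t)$ carries, besides the pole $1/(\alpha-\lambda)$, a singularity of order $(1-t)^{-(d+\alpha-3)/2}$ as $\theta\to\eta$, and obtaining the stated \emph{relative} error $\BigO(\omega(\Theta))$ requires interpolating this singular part against the pole (the paper's choice $\delta=(d+\alpha-1)^{-1}$) and integrating it over the cap. Your sketch never confronts this term, so the claimed error rate --- as opposed to the leading-order identification of $B_{d,\alpha}$, which is correct --- is not yet justified.
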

The result is proved in Section~\ref{sec:pmr} below.
The exponent 
$d-1+\alpha$
was conjectured by Tadeusz Kulczycki in a private conversation on the methods of
\cite{MR1750907}. 
Here is an overview of our development. We consider the unit sphere: 
$$\Sd=\{x\in \R^d: \, |x|=1\},$$
and
the spherical cap of $\Gamma_\Theta$:
\begin{gather}\label{btheta1}
  B_\Theta=\left\{ \theta\in\Sd :\; \theta_d> \cos\Theta \right\}.
\end{gather}
If $\phi$ is a function on $\Sd$, $\Phi(x)=|x|^{\lambda}\phi(|x|^{-1}x)$, $x\in \Rdz$, and $\lambda\in (-d,\alpha)$, then we have the following
decomposition:
$$\Delta^{\alpha/2}\Phi(x)=\Delta^{\alpha/2}_\Sd\phi(x)+R_{\lambda}\phi(x), \qquad x\in \Sd.$$ 
The spherical part $\Delta^{\alpha/2}_\Sd$ is an integro-differential operator on $\Sd$ akin to the fractional Laplacian in dimension $d-1$. The radial part $R_\lambda$ is an integral operator on $\Sd$ whose nonnegative kernel increases in $\lambda\in((\alpha-d)/2,\alpha)$, in fact explodes as $\lambda \to \alpha$. 
We have $\Delta^{\alpha/2}_\Sd M=-R_\beta M$ on $B_\Theta$. Heuristically,  $\beta$ is a generalized  eigenvalue of $\Delta^{\alpha/2}_\Sd$ with Dirichlet conditions, relative
to the family $R_\lambda$.
In the classical case $\alpha=2$, the operator $R_\gamma$ reduces to multiplication by $\gamma$, which leads to a genuine eigenproblem on the sphere (see e.g. \cite{zbMATH06135849}).
To estimate $\beta$ we define a suitable spherical profile function $\phi$ on $\Sd$ supported on $B_\Theta$, extend it  to be $\lambda$-homogeneous on $\Rd$ and choose $\lambda$ so that the extension is either superharmonic or subharmonic for $\Delta^{\alpha/2}$. This yields lower and upper bounds for 
$M$ and $\beta$ by means of the maximum principle for
$\Delta^{\alpha/2}$. Namely, $\Delta^{\alpha/2}_\Sd\phi$ is provided by a judicious choice of $\phi$, and 
we control $\alpha-\beta$ by proving uniform estimates for the kernel of $R_\lambda$.

Two candidates offer themselves to construct $\phi$:
the principal eigenfunction of the fractional Laplacian for the ball in dimension $d-1$ and the expected exit time of the isotropic $\alpha$-stable L\'evy process from the  ball. 
Surprisingly, it is the latter choice that allows us to handle the super- and subharmonicity of $\Phi$ 
for $\Delta^{\alpha/2}$ up to the boundary of $\Gamma_\Theta$. 
The expected exit time of the ball has the additional advantage 
of being explicit, allowing us to construct explicit barriers (i.e. superharmonic functions vanishing at the boundary) for narrow cones.
We remark that the principal eigenfunction and eigenvalue of the ball for $\Delta^{\alpha/2}$ are not known (see \cite{MR2974318} for bounds and references),
therefore the above expression for 
$B_{d,\alpha}$
 is a remarkable serendipity. We note in passing the Martin kernel of $\Gamma_\Theta$ with the pole at the origin  is $|x|^{\alpha-d}M(x/|x|^2)$, $x\in \Rdz$ \cite{MR2075671}, hence its singularity at the origin is roughly $|x|^{-d+B_{d,\alpha} \Theta^{d-1+\alpha}}$ for small $\Theta$.
This exemplifies some of the extreme behaviour of nonnegative $\alpha$-harmonic functions at the boundary of (narrow) Lipschitz open sets.

The structure of the paper is as follows. The main line of arguments is presented in \autoref{sec:prel}, where 
we give preliminaries, detail  the above decompositions of $\Delta^{\alpha/2}$ and state precise asymptotic results for the kernels of the spherical and radial operators. As mentioned, the spherical profile $\phi$ is constructed from
the expected exit time of the ball for the isotropic $\alpha$-stable L\'evy process in $\R^{d-1}$. We also use 
a suitable variant of 
Kelvin transform to define $\phi$. We then estimate $\Delta^{\alpha/2}_\Sd\phi+R_\lambda\phi$.
The proof of \autoref{mainthm} is given at the end of \autoref{sec:prel}.
In \autoref{sec:ug} we collect the more technical proofs from \autoref{sec:prel} and some auxilary results. We also prove the following result
for the classical Laplacian in the complement of a plane slit by a cone, using
\autoref{mainthm} and the connection of $\Delta$ and 
$\Delta^{1/2}$ in dimensions $d$ and $d-1$, respectively.
\begin{corollary}\label{cor:slit}
Let $V=\{(x_1,x_2,x_3)\in \R^3: x_3\neq 0 \mbox{ or } x_2>\sqrt{x_1^2+x_2^2}\cos \Theta\}$. The homogeneity exponent of Martin kernel of $\Delta$ and $V$ 
is $1-\Theta^2/4+\BigO(\Theta^3\log \Theta)$ as $\Theta\to 0^+$.
\end{corollary}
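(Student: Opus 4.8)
The plan is to identify the homogeneity exponent for $\Delta$ and the slit domain $V\subset\R^3$ with the exponent $\beta$ for $\Delta^{1/2}$ and a narrow cone in $\R^2$, and then apply Theorem~\ref{mainthm} with the specific values $d=2$, $\alpha=1$. The link is the classical subordination identity: the Martin kernel (and more generally nonnegative harmonic functions vanishing on a hyperplane) for $\Delta$ in $\R^{d}$ restricted to a cylinder over a $(d-1)$-dimensional set $D$ corresponds, via the half-space harmonic extension, to nonnegative $\Delta^{1/2}$-harmonic functions in $D\subset\R^{d-1}$. Here $d=3$, and the relevant $(d-1)$-dimensional domain is exactly the planar sector $\Gamma_\Theta\subset\R^2$ of half-angle $\Theta$, because $V$ is the complement in $\R^3$ of the plane $\{x_3=0\}$ slit along the two-dimensional cone $\{x_2>\sqrt{x_1^2+x_2^2}\cos\Theta,\ x_3=0\}$. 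First I would make this correspondence precise: a $\lambda$-homogeneous nonnegative function on $V$, harmonic for $\Delta$ and vanishing on $V^c$, is the Poisson (harmonic) extension to the upper and lower half-spaces $\{x_3>0\}$, $\{x_3<0\}$ of a $\lambda$-homogeneous nonnegative function on $\R^2$ that is $\Delta^{1/2}$-harmonic on $\Gamma_\Theta$ and vanishes on $\Gamma_\Theta^c$; this uses that $\Delta^{1/2}$ is (up to the standard constant) the Dirichlet-to-Neumann operator of the half-space, and that the jump of the normal derivative across the slit vanishes precisely on the $\Delta^{1/2}$-harmonic set. Hence the homogeneity exponent for $(\Delta,V)$ equals $\beta(2,1,\Theta)$.

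With this reduction in hand, the rest is arithmetic. By Theorem~\ref{mainthm} with $d=2$ and $\alpha=1$,
\begin{gather*}
\beta(2,1,\Theta)=1-\Theta^{\,2-1+1}\big[B_{2,1}+\BigO(\omega(\Theta))\big]
=1-\Theta^{2}\big[B_{2,1}+\BigO(\Theta|\log\Theta|)\big],
\end{gather*}
since $d-1+\alpha=2$ and $\omega(\Theta)=\Theta|\log\Theta|$ at $\alpha=1$. It then remains to evaluate the constant $B_{2,1}$ from \eqref{eq:defs}. Substituting $d=2$, $\alpha=1$ gives
\begin{gather*}
B_{2,1}=\frac{\Gamma(3/2)}{\pi^{3/2}\Gamma(1)}\,\sin\!\Big(\frac{\pi}{2}\Big)\,B\!\Big(\tfrac{3}{2},\tfrac{1}{2}\Big),
\end{gather*}
and using $\Gamma(3/2)=\sqrt\pi/2$, $\Gamma(1)=1$, $\sin(\pi/2)=1$, and $B(3/2,1/2)=\Gamma(3/2)\Gamma(1/2)/\Gamma(2)=(\sqrt\pi/2)\sqrt\pi/1=\pi/2$, one obtains $B_{2,1}=(\sqrt\pi/2)\cdot\pi^{-3/2}\cdot(\pi/2)=1/4$. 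Therefore the exponent is $1-\Theta^2/4+\BigO(\Theta^3\log\Theta)$, as claimed.

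I expect the main obstacle to be the first step: justifying the equivalence of the two Martin-kernel problems rigorously, i.e. that the extension/restriction procedure is a bijection between the relevant classes of homogeneous nonnegative harmonic functions and that it preserves the homogeneity degree. One must check that the harmonic extension of a $\lambda$-homogeneous function is again $\lambda$-homogeneous (immediate from uniqueness and the scaling of $\Delta$), that vanishing on $\Gamma_\Theta^c\subset\R^2$ corresponds to vanishing on $V^c\subset\R^3$ together with the correct matching of normal derivatives across the slit, and that uniqueness of the Martin kernel transfers through the correspondence; the $\Delta^{1/2}$-harmonicity on $\Gamma_\Theta$ is exactly the statement that the full function on $\R^3\setminus\{x_3=0\}$ extends harmonically across $\{x_3=0\}\cap\Gamma_\Theta$, which is the content of the Caffarelli--Silvestre / classical half-space extension. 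Once this identification is secured, the computation of $B_{2,1}$ and the substitution into $\omega$ are routine, and the error term $\BigO(\Theta^3\log\Theta)=\Theta^2\cdot\BigO(\Theta|\log\Theta|)$ follows directly.
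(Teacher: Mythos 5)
Your proposal is correct and follows essentially the same route as the paper: extend the $\Delta^{1/2}$-Martin kernel of the planar cone $\Gamma_\Theta\subset\R^2$ harmonically to $\R^3$ via the Poisson kernel (the paper's Lemma~\ref{lem:he}), note that the extension inherits the homogeneity degree $\beta(2,1,\Theta)$, and evaluate $B_{2,1}=1/4$. The uniqueness issue you rightly single out as the main obstacle is settled in the paper in one direction only (extension of $M$ up to $\R^3$, rather than a full bijection) by invoking Benedicks' theorem that all nonnegative harmonic functions vanishing on the complement of such a slit domain are proportional, so the extension is automatically a multiple of the Martin kernel of $V$.
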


Our work leads to interesting new problems.
It is worthwhile to 
study the above generalized eigenproblem of $\Delta^{\alpha/2}_\Sd$  in the setting of $L^2(\Sd,\sigma)$, as opposed to the present pointwise setting. Some results in this direction are given in \cite{MR2075671}.  It would be very interesting to understand the generalized higher-order eigenfunctions of $\Delta^{\alpha/2}_\Sd$ with respect to $R_\lambda$ and, ultimately, {oscillating} harmonic functions of 
$\Delta^{\alpha/2}$.
We note that {nonnegative} harmonic functions of $\Delta^{\alpha/2}$ have been completely described in \cite{MR2365478}, but oscillating harmonic functions of the operator are hardly understood. Similar problems are relevant for more general nonlocal L\'evy-type
operators, of which 
$\Delta^{\alpha/2}$
is but a prominent example. For instance, the homogeneity of the Martin kernel of cones for more 
general stable L\'evy processes should now be available by using the methods of \cite{MR2075671} and the boundary Harnack principle recently proved in \cite{2012arXiv1207.3160B}.

\noindent
{\bf Acknowledgements:}
We thank Tadeusz Kulczycki for discussions and suggestions on \autoref{mainthm} and \autoref{cor:slit}.
We thank Amir Dembo for stimulating discussions on Martin kernels. Krzysztof Bogdan thanks
the Department of Statistics at Stanford University and the Institute of Mathematics of the Polish Academy of Sciences for their hospitality during his work on the paper.

\section{Homogeneous superharmonic functions}\label{sec:prel}
\subsection{Decomposition of the fractional Laplacian}
Below we let $d\ge2$ and $0<\alpha<2$, unless explicitly stated otherwise. (The reader may consult \cite{MR2075671} for $d=1$ and \cite{MR0474525} for  $\alpha=2$.) 
All sets, functions and measures on $\Rd$ considered below are
assumed Borel. By $dx$ we denote the Lebesgue measure on $\Rd$ or $\Rdd$, depending on context.
We let $\sigma$ be the $(d-1)$-dimensional Hausdorff measure (surface measure), so normalized that
$$
\omega_d:=\sigma(\Sd)=2\pi^{d/2}/\Gamma(d/2).
$$
Let $0<\Theta<\pi$ and $\one=(0,0,\dots,0,1)$. We have
\begin{align*}
\Gamma_\Theta=\left\{ x\in\Rd:\; { \ps{x}{\one} >|x|\cos\Theta} \right\} \quad \mbox{ and } \quad
B_\Theta=\left\{ \theta\in\Sd :\; \ps\theta\one > \cos\Theta \right\}.
\end{align*}
Let
\[ \Ad = \alpha 2^{\alpha-1} \pi^{-d/2}  \Gamma\left(\frac{d+\alpha}{2}\right) / \Gamma\left(1-\frac{\alpha}{2}\right).\] 
For a real-valued function $\Phi$ on $\Rd$, twice continuously differentiable, i.e. $C^2$
near some point $x\in \Rd$ and such that
\begin{align}\label{eq:dc}
&\int_\Rd |\Phi(y)|(1+|y|)^{-d-\alpha}dy<\infty,
\end{align}
we define
\begin{equation}\label{eq;deful}
  \Delta^{\alpha/2}\Phi(x) =  
  \lim_{\epsilon \downarrow 0}\Ad\int_{|y{-x}|>\epsilon}
  \left[\Phi(y)-\Phi(x)\right]|y-x|^{-d-\alpha}dy,
\end{equation}
the fractional Laplacian of $\Phi$ at $x$, and
we say $\Phi$ is $\alpha$-harmonic, i.e. harmonic for $\Delta^{\alpha/2}$, on open $D\subset \Rd$ if 
$\Delta^{\alpha/2}\Phi(x)=0$ for $x\in D$
(see \cite{MR1671973, MR1825645, MR2569321} for broader discussion).
If $x\not \in {\rm supp}\, \Phi$, then
\begin{equation}\label{eq;defulpn}
  \Delta^{\alpha/2}\Phi(x) = 
  \Ad \int_{\Rd}
  \Phi(y)  |y-x|^{-d-\alpha} dy\,.
\end{equation}
If $r>0$ and $\Phi_r(x)=\Phi(r x)$, then the following scaling property holds:
\begin{equation}
  \label{eq:scfr}
  \Delta^{\alpha/2}\Phi_r(x)=r^\alpha \Delta^{\alpha/2}\Phi(rx)\,,\quad
  x\in \Rd\,.  
\end{equation}
In this respect, $\Delta^{\alpha/2}$ behaves like differentiation of order $\alpha$.

Let $\Cda=
\Gamma\left(\frac{d}{2}\right)/\left[2^\alpha\Gamma\left(\frac{d+\alpha}{2}\right)\Gamma\left(1+\frac{\alpha}{2}\right)\right]$ and define
\begin{align}\label{eq:weetb}
S(x)&=\Cda \left(1-|x|^2\right)^{\alpha/2}, \qquad \mbox{if} \quad |x|\le 1,
\end{align}
and $S(x)=0$ if $|x|>1$. We then have 
\begin{align}\label{eq:Seet}
\Delta^{\alpha/2} S(x)=-1,\qquad |x|<1. 
\end{align}
Indeed, $S(x)$ is identified in \cite{MR0137148} with the expected time of the first exit from the unit ball for the isotropic $\alpha$-stable L\'evy process starting at $x\in \Rd$, from which \eqref{eq:Seet} follows, cf. \cite[Lemma~5.3]{MR1825645} and \cite[Lemma~3.8]{MR1671973}.
An alternative approach to \eqref{eq:weetb}
and further probabilistic connections may be found in \cite[(5.4)]{MR1825645} and \cite{MR0137148}. A direct purely analytic proof of \eqref{eq:Seet} is given in \cite{MR2974318}, cf. Table~3 ibid. 

For $\lambda\in ({-d}, \alpha)$ we consider, after \cite[Section 5]{MR2075671}, the following kernel
  \begin{align}\label{ugamma}
    u_\lambda(t)&=
    \int_0^\infty r^{d+\lambda-1}(r^2-2rt+1)^{-(d+\alpha)/2}dr,\\
&= {\int_0^1 r^{-1}(r^{d+\lambda}+r^{\alpha-\lambda}) (r^2-2rt+1)^{-(d+\alpha)/2} dr},\label{eq:ug01}
\quad -1\le t < 1
  \end{align}
(we drop $d$ and $\alpha$ from the notation).
Since $r^2-2rt+1=(r-t)^2+1-t^2$, we see {from \autoref{ugamma}} that $u_\lambda(t)<\infty$ if $-1\le t<1$  and $u_\lambda(1)=\infty$. By \autoref{eq:ug01},
$[-1,1)\ni t\mapsto u_\lambda(t)$ is strictly increasing.
Furthermore, for every $t\in [-1,1)$, $\lambda\mapsto u_\lambda(t)$ is strictly increasing on $[(\alpha-d)/2,\alpha)$ and $\lim_{\lambda\to \alpha}u_\lambda(t)=\infty$ (see \cite[Lemma 5.2]{MR2075671} or below).

The following result is given in \cite[ (37)]{MR2075671}.
\begin{lemma}\label{decomp}
If $\Phi$ is real-valued, homogeneous of degree 
$\lambda\in (-d,\alpha)$ on $\Rd$, bounded on $\Sd$ and $C^2$ near a point $\eta \in \Sd$, and if $\phi$ is the restriction of $\Phi$ to $\Sd$, then
\begin{align*}
    \lap \Phi(\eta)=\lapS \phi(\eta)+R_\lambda\phi(\eta),
\end{align*}
where
the spherical fractional Laplacian is 
\begin{align}
      \lapS \phi(\eta)=\Ad \lim_{\epsilon\to 0^+}
      \int_{\Sd\setminus\left\{\ps\theta\eta>\cos\epsilon \right\}} 
      [\phi(\theta)-\phi(\eta)]u_0(\ps\theta\eta)\sigma(d\theta), \label{eq:sul}
\end{align} 
and the radial 
operator of order $\lambda$ is
\begin{align}
      R_\lambda\phi(\eta)=\Ad\int_\Sd \phi(\theta)
      [u_\lambda(\ps\theta\eta)-u_0(\ps\theta\eta)]\sigma(d\theta). \label{eq:rul}
\end{align}
\end{lemma}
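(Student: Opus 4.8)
The strategy, following \cite{MR2075671}, is to pass to polar coordinates and reduce both sides of the asserted identity to the same principal value over $\Rd$; the only real work is reconciling two different truncations near $\eta$.

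First I would record the elementary facts. Since $\Phi$ is homogeneous of degree $\lambda$ and bounded on $\Sd$, we have $|\Phi(y)|\le C|y|^\lambda$, so \eqref{eq:dc} holds precisely because $-d<\lambda<\alpha$; with the $C^2$ hypothesis at $\eta$ this makes $\lap\Phi(\eta)$ well defined by \eqref{eq;deful}. Writing $y=r\theta$ with $r=|y|>0$, $\theta\in\Sd$, we have $dy=r^{d-1}\,dr\,\sigma(d\theta)$, $\Phi(y)=r^\lambda\phi(\theta)$ and $|y-\eta|^2=r^2-2r\ps\theta\eta+1$, so for each fixed $\theta\neq\eta$ the radial integral is absolutely convergent and, by \eqref{ugamma},
\begin{align*}
\int_0^\infty\big[\Phi(r\theta)-\Phi(\eta)\big](r^2-2r\ps\theta\eta+1)^{-(d+\alpha)/2}r^{d-1}\,dr=\phi(\theta)u_\lambda(\ps\theta\eta)-\phi(\eta)u_0(\ps\theta\eta).
\end{align*}
I would also record the size of the kernels near $\eta$. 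From \eqref{eq:ug01}, $u_\lambda(t)-u_0(t)=\int_0^1r^{-1}(r^\lambda-1)(r^d-r^{\alpha-\lambda})(r^2-2rt+1)^{-(d+\alpha)/2}\,dr$, whose numerator has a \emph{double} zero at $r=1$; estimating the integral shows $u_\lambda(t)-u_0(t)$ stays bounded as $t\to1$ when $d+\alpha<3$, is $\BigO(\log\tfrac{1}{1-t})$ when $d+\alpha=3$, and is $\BigO((1-t)^{(3-d-\alpha)/2})$ when $d+\alpha>3$. In each case, integrated against $\sigma(d\theta)$ near $\eta$ (where $1-\ps\theta\eta\asymp|\theta-\eta|^2$ and $\sigma$ has density $\asymp|\theta-\eta|^{d-2}$), this is finite because $\alpha<2$, so $R_\lambda\phi(\eta)$ in \eqref{eq:rul} is an absolutely convergent integral; by contrast $u_0(\ps\theta\eta)\asymp|\theta-\eta|^{-(d-1)-\alpha}$ near $\eta$, so $\lapS$ genuinely requires the principal value in \eqref{eq:sul}.

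Now the main step. Since $\theta\mapsto\phi(\theta)[u_\lambda(\ps\theta\eta)-u_0(\ps\theta\eta)]$ is absolutely $\sigma$-integrable it may be put under the cap truncation of \eqref{eq:sul}, and then the displayed radial identity gives, with $g(y):=[\Phi(y)-\Phi(\eta)]|y-\eta|^{-d-\alpha}$ and $C_\epsilon:=\{r\theta:r>0,\ \ps\theta\eta>\cos\epsilon\}$ the thin cone about $\eta$,
\begin{align*}
\lapS\phi(\eta)+R_\lambda\phi(\eta)=\Ad\lim_{\epsilon\to0^+}\int_{\{\ps\theta\eta\le\cos\epsilon\}}\big[\phi(\theta)u_\lambda(\ps\theta\eta)-\phi(\eta)u_0(\ps\theta\eta)\big]\sigma(d\theta)=\Ad\lim_{\epsilon\to0^+}\int_{\Rd\setminus C_\epsilon}g(y)\,dy,
\end{align*}
whereas $\lap\Phi(\eta)=\Ad\lim_{\epsilon\to0^+}\int_{\Rd\setminus B(\eta,\epsilon)}g(y)\,dy$ by \eqref{eq;deful}, where $B(\eta,\epsilon)=\{|y-\eta|<\epsilon\}$. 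So it remains to show that deleting $C_\epsilon$ or $B(\eta,\epsilon)$ gives the same limit, i.e. that $\int_{B(\eta,\epsilon)\setminus C_\epsilon}g-\int_{C_\epsilon\setminus B(\eta,\epsilon)}g\to0$. The first integrand lives on a tiny region (a sliver near the equator of $\partial B(\eta,\epsilon)$, of measure $\BigO(\epsilon^{d+1})$) on which $|g|\le\BigO(\epsilon^{1-d-\alpha})$, so it is $\BigO(\epsilon^{2-\alpha})\to0$. For the second: far from $\eta$ the cone $C_\epsilon$ has angular cross-section $\BigO(\epsilon^{d-1})$ and $g$ is integrable there by \eqref{eq:dc}, contributing $\BigO(\epsilon^{d-1})$; near $\eta$ one uses the $C^2$ expansion $\Phi(y)-\Phi(\eta)=L(y-\eta)+\BigO(|y-\eta|^2)$, whose quadratic remainder times $|y-\eta|^{-d-\alpha}$ is locally integrable (here $\alpha<2$ is used), while its linear, odd part $L$ contributes negligibly: $B(\eta,\epsilon)$ is exactly symmetric under $v\mapsto-v$ ($v:=y-\eta$), and near $\eta$ the cone reads $C_\epsilon=\{v:|\Pi v|<\epsilon(1+\BigO(|v|))\}$ with $\Pi$ the orthogonal projection onto $\eta^\perp$, hence symmetric under $v\mapsto-v$ up to an $\BigO(\epsilon|v|)$ perturbation of its boundary, so over such sets $\int L(v)|v|^{-d-\alpha}\,dv$ is only the contribution of a thin asymmetric shell. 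Summing up, the difference is $\BigO(\epsilon^\delta|\log\epsilon|)$ for some $\delta>0$, hence $\to0$, and therefore $\lap\Phi(\eta)=\lapS\phi(\eta)+R_\lambda\phi(\eta)$.

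I expect this last step — matching the ball truncation of \eqref{eq;deful} with the cap truncation implicit in \eqref{eq:sul} — to be the only delicate point; the rest is polar-coordinate bookkeeping and the definitions of $u_\lambda,u_0$. It is worth keeping two distinct cancellations apart: the oddness of the linear Taylor term of $\Phi$ at $\eta$, which makes the principal value well defined and insensitive to the shape of the deleted (approximately symmetric) neighbourhood; and the double zero at $r=1$ of the integrand of $u_\lambda-u_0$, which turns $R_\lambda\phi(\eta)$ into an ordinary absolutely convergent integral. In particular, the naive split $\Phi(y)-\Phi(\eta)=r^\lambda[\phi(\theta)-\phi(\eta)]+\phi(\eta)(r^\lambda-1)$ followed by dominated convergence on each summand does not work, since $|r^\lambda-1|(r^2-2r\ps\theta\eta+1)^{-(d+\alpha)/2}r^{d-1}$ fails to be $dr\otimes\sigma(d\theta)$-integrable near $\eta$ as soon as $\alpha\ge1$.
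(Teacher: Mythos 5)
The paper does not prove this lemma; it simply cites it as \cite[(37)]{MR2075671}, where the same polar-coordinate derivation is carried out. Your argument is correct and follows that standard route: the radial integral reproduces $u_\lambda$ and $u_0$ exactly as defined in \eqref{ugamma}, the double zero of the integrand of $u_\lambda-u_0$ at $r=1$ indeed makes \eqref{eq:rul} absolutely convergent for $\alpha<2$, and your reconciliation of the ball truncation in \eqref{eq;deful} with the cap (i.e.\ cone) truncation in \eqref{eq:sul} — the sliver $B(\eta,\epsilon)\setminus C_\epsilon$ of measure $\BigO(\epsilon^{d+1})$ contributing $\BigO(\epsilon^{2-\alpha})$, plus the near-symmetry of $C_\epsilon-\eta$ under $v\mapsto-v$ to kill the linear Taylor term — is exactly the delicate point and is handled correctly.
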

\noindent
We may consider the equation $\lapS \phi(\eta)=\lap \Phi(\eta)$, $\eta\in \Sd$, as the definition of $\lapS \phi$, if $\Phi$ is the $0$-homogeneous extension of $\phi$ to $\Rdz$.

For two nonnegative functions $f$ and $g$ on a set $D$, we 
{say that $f$ is comparable to $g$ and} 
write $f\asymp g$, or $f(x)\asymp g(x)$ for $x\in D$, if constant $C$ exists such that
$$C^{-1}g(x) \le f(x) \le Cg(x), \quad x\in D.$$
Here constant means a positive number independent of $x$. If not specified otherwise constants depend only on $d$ and $\alpha$. 
If we write $C=C(a,\ldots,z)$, then we mean that $C$ depends only on $a,\ldots, z$.
The actual value of a constant may change from line to line.
For instance,  $M(\theta)\asymp \dist(\theta, \Gamma^c)^{\alpha/2}$, $\theta\in \Sd$, which is related to the fact that $\Gamma$ is smooth except at the vertex, and so
$$M(x)\asymp |x|^{\beta-\alpha/2} \dist(x, \Gamma^c)^{\alpha/2}, \quad x\in\Rdz,$$  
see \cite[Lemma 3.3]{MR2213639}.
We say that $\phi$ defined on $\Sd$ is $C^2$ on a part of the sphere if its $0$-homogeneous extension to $\Rd$ is $C^2$ in a neighborhood of this part of the sphere.

The following lemma helps estimate the homogeneity degree $\beta$ of $M$. 
\begin{lemma}\label{betaestimates}
Let $\phi\in C(\Sd)\cap C^2(B_\Theta)$ and $\phi(\theta)\asymp \dist^{\alpha/2}
(\theta,\Gamma_\Theta^c)$ for $\theta\in \Sd$.\\
If $\lambda\in(0,\alpha)$ is such that $\lapS\phi+R_\lambda\phi\ge0$ on $B_\Theta$, 
then $\lambda\ge \beta(\alpha,\Theta)$. \\
If $\lambda\in(0,\alpha)$ is such that $\lapS\phi+R_\lambda\phi\le0$ on $B_\Theta$, 
then $\lambda\le \beta(\alpha,\Theta)$. 
\end{lemma}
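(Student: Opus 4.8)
The plan is to use the maximum principle for $\Delta^{\alpha/2}$ applied to the difference between the homogeneous extension $\Phi$ of $\phi$ and the Martin kernel $M$. First I would recall that, by \autoref{decomp}, the hypothesis $\lapS\phi+R_\lambda\phi\ge 0$ on $B_\Theta$ is equivalent to $\lap\Phi\ge 0$ on $\Gamma_\Theta\cap\Sd$, and then, by the scaling property \eqref{eq:scfr} and the $\lambda$-homogeneity of $\Phi$, to $\lap\Phi\ge 0$ on all of $\Gamma_\Theta\setminus\{0\}$; that is, $\Phi$ is subharmonic for $\Delta^{\alpha/2}$ in $\Gamma_\Theta$. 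Symmetrically, $\lapS\phi+R_\lambda\phi\le 0$ gives that $\Phi$ is superharmonic in $\Gamma_\Theta$. The idea is that a subharmonic function of homogeneity $\lambda$ vanishing outside $\Gamma_\Theta$ cannot decay faster at the vertex than the Martin kernel (which is harmonic of homogeneity $\beta$), forcing $\lambda\ge\beta$; and dually $\lambda\le\beta$ in the superharmonic case.

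Next I would make this comparison precise. Both $\Phi$ and $M$ vanish on $\Gamma_\Theta^c$ and, by hypothesis on $\phi$ together with the boundary estimate $M(\theta)\asymp\dist(\theta,\Gamma_\Theta^c)^{\alpha/2}$ recalled before the lemma, we have $\phi\asymp M$ on $\Sd$, hence $\Phi(x)\asymp |x|^{\lambda-\beta}M(x)$ on $\Rdz$ after accounting for the differing homogeneities. Consider first the subharmonic case. Fix $0<\epsilon<1<R<\infty$ and the truncated shell $D=\Gamma_\Theta\cap\{\epsilon<|x|<R\}$. On the annular pieces $\{|x|=\epsilon\}$ and $\{|x|=R\}$ of $\partial D$ (intersected with $\overline{\Gamma_\Theta}$) and on $\Gamma_\Theta^c$, one bounds $\Phi$ above by a suitable multiple $c\,M$: on the inner sphere this costs a factor $\epsilon^{\lambda-\beta}$, on the outer sphere a factor $R^{\lambda-\beta}$. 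Since $\Phi-cM$ is subharmonic in $D$ and $\le 0$ on $D^c$ (using that $M$ is harmonic, hence $\lap(\Phi-cM)=\lap\Phi\ge0$ in $D$), the maximum principle gives $\Phi\le cM$ in $D$. Letting $\epsilon\to0$ and $R\to\infty$: if $\lambda<\beta$ the inner-sphere factor $\epsilon^{\lambda-\beta}\to\infty$, which is harmless, but the comparison on the whole ray shows $\Phi(x)/M(x)=|x|^{\lambda-\beta}\cdot(\text{bounded})$, and testing at points with $|x|\to0$ we would get $\Phi(x)\le cM(x)$ with $\Phi(x)/M(x)\to\infty$, a contradiction; hence $\lambda\ge\beta$. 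The superharmonic case is entirely dual, comparing $M$ against a multiple of $\Phi$ and testing as $|x|\to\infty$, yielding $\lambda\le\beta$.

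A cleaner way to organize the same argument, which I would prefer in the write-up, is to avoid truncation altogether by working directly with homogeneity: if $\Phi$ is subharmonic and homogeneous of degree $\lambda$ with $\Phi\asymp M$ on $\Sd$, then on the cone both $\Phi$ and $M$ are $\alpha$-harmonic majorants/minorants in the sense that one can find constants $c_1,c_2>0$ with $c_1 M\le \Phi$ and $\Phi\le c_2 M$ on $\Sd$; extending $c_1 M$ as a $\beta$-homogeneous function, the function $c_1 M-\Phi$ (restricted to $\Gamma_\Theta$, zero outside) is superharmonic if $\lambda\le\beta$ and subharmonic if $\lambda\ge\beta$ only on parts of the cone, so one really does need the shell and a limiting argument — hence the truncation is the honest route. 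In either presentation the main obstacle is the same: justifying the passage to the limit in the maximum-principle inequality, i.e. controlling the contributions from the inner and outer spherical caps of the truncated cone. One must ensure that the nonlocal maximum principle applies on $D$ (which requires $\Phi$ and $M$ to satisfy the integrability condition \eqref{eq:dc} — true since $\lambda,\beta\in(0,\alpha)$ guarantee $\lambda,\beta<\alpha$ at infinity and $\lambda,\beta>-d$ near the origin), and that the boundary comparison constant does not blow up in a way that swamps the homogeneity gain; the strict monotonicity of $|x|^{\lambda-\beta}$ is what ultimately produces the contradiction and pins down the sign of $\lambda-\beta$. I would also remark that the argument only uses $\lambda\in(0,\alpha)$ through the homogeneity range in \autoref{decomp} and through $\beta\in(0,\alpha)$, so no further hypotheses are needed.
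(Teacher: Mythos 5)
Your overall strategy -- comparing the $\lambda$-homogeneous extension $\Phi$ with the Martin kernel $M$ via the maximum principle for $\Delta^{\alpha/2}$ -- is the right one and is the same as the paper's, and your reduction of the hypothesis to $\lap\Phi\ge 0$ on all of $\Gamma_\Theta\setminus\{0\}$ via \autoref{decomp} and \eqref{eq:scfr} is fine. But the way you try to close the argument, through a truncated shell $D=\Gamma_\Theta\cap\{\epsilon<|x|<R\}$ and a limit $\epsilon\to0$, $R\to\infty$, has a genuine gap. For the nonlocal maximum principle on $D$ you must dominate $\Phi$ by $cM$ on \emph{all} of $D^c$, which includes the solid regions $\Gamma_\Theta\cap\{|x|\le\epsilon\}$ and $\Gamma_\Theta\cap\{|x|\ge R\}$, not merely the two spherical caps. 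If $\lambda<\beta$ (the case you need to exclude), then $\Phi/M=|x|^{\lambda-\beta}\,\phi(\theta)/M(\theta)\to\infty$ as $|x|\to0$, so the admissible constant is $c\asymp\epsilon^{\lambda-\beta}\to\infty$, the conclusion $\Phi\le cM$ on $D$ becomes vacuous, and no contradiction is obtained; your text acknowledges the blow-up ("which is harmless, but\dots") and then asserts a contradiction without deriving one. The third paragraph likewise records the obstacle ("one must ensure \dots that the boundary comparison constant does not blow up") but does not resolve it, and the claim there that $c_1M-\Phi$ is super- or subharmonic "only on parts of the cone" depending on the sign of $\lambda-\beta$ is wrong: since $M$ is harmonic, $c_1M-\Phi$ is superharmonic on all of $\Gamma_\Theta$ whenever $\Phi$ is subharmonic there.

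The paper closes the argument globally, with no truncation. Assume for contradiction $0<\lambda<\beta$ in the first assertion. Set $h=\Phi-M$, which is continuous on $\Rd$ and vanishes on $\Gamma_\Theta^c$. Because $\phi\asymp M$ on $\Sd$ and $\lambda<\beta$, one has $h>0$ for small $x\in\Gamma_\Theta$ and $h<0$ for large $x\in\Gamma_\Theta$ (and $h\to0$ at the origin since $\lambda,\beta>0$), so $h$ attains a \emph{positive global maximum} at some interior point $x_0\in\Gamma_\Theta$. At such a point the defining integral \eqref{eq;deful} gives $\lap h(x_0)<0$ \emph{strictly}, because $h(y)-h(x_0)\le0$ everywhere and equals $-h(x_0)<0$ on the set $\Gamma_\Theta^c$ of positive measure. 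Since $\lap M(x_0)=0$, this contradicts $\lap\Phi(x_0)=|x_0|^{\lambda-\alpha}\bigl[\lapS\phi+R_\lambda\phi\bigr](x_0/|x_0|)\ge0$. The second assertion is symmetric. This "evaluate at the global extremum and use the exterior mass" step is exactly the ingredient your write-up is missing.
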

\begin{proof} Suppose that $0<\lambda<\beta(\alpha,\Theta)$. 
Let $\Phi(0)=0$ and
 $\Phi(x)=|x|^{\lambda}\phi\left( x/|x|\right)$ for $x\in \Rdz$. 
Note that $\lap \Phi(x)=|x|^{\lambda-\alpha}\lap \Phi\left( x/|x|\right)$ for $x\in \Gamma_\Theta$.
The function $h=\Phi-M$ is continuous on $\Rd$, $C^2$ on $\Gamma_\Theta$ and vanishes on $\Gamma_\Theta^c$.
Since $\Phi$ and $M$ are comparable on $\Sd$, $h(x)<0$ for large enough $x\in \Gamma_\Theta$ and $h(x)>0$ for small enough $x\in \Gamma_\Theta$.
Therefore $h$ has a global positive maximum at some $x_0\in \Gamma_\Theta$.
Considering the integration on $\Gamma_\Theta^c$ in \autoref{eq;deful} and \autoref{eq:scfr} we obtain
\begin{align*}
0&>\lap h(x_0)=\lap \Phi(x_0)-\lap M(x_0)=\lap \Phi(x_0)\\
&=|x_0|^{\lambda-\alpha}\left[\lapS\phi(x_0/|x_0|)+R_\lambda\phi(x_0/|x_0|)\right].
\end{align*}
This yields the first assertion, and the second one is proved similarly.
\end{proof}

\subsection{Inversion}
We shall construct functions $\phi$ satisfying the assumptions of \autoref{betaestimates}
by using an appropriate Kelvin transform. 
The inversion $T$ with respect to the unit sphere $\Sd$ is 
$$
Tx=\frac{1}{|x|^2}x,\quad x\in \Rdz.
$$
Note that $T^2$ is the identity of $\Rdz$, and $T\left(\frac12\one\right)=2\one$.
The inversion preserves angles
and the class of all straight lines and circles on $\Rdz$, because
\begin{equation}\label{conf}
|Tx-Ty|=\frac{|x-y|}{|x||y|},\quad x,y\in \Rdz.
\end{equation}
The Kelvin transform $K$ appropriate for $\lap$ is defined, for functions $v$ on $\Rdz$, as follows
$$
(Kv)(y)=|y|^{\alpha-d}v(Ty), \qquad y\in \Rdz.
$$
Thus, $K^2v=v$. We have
\begin{equation}\label{lapKt}
\lap [Kv](y)=|y|^{-\alpha-d}\lap v(Ty), {\qquad y\in \Rdz}.
\end{equation}
The formula is given in \cite[p.~112]{MR2256481} as a consequence of a transformation rule for Green potentials of $\lap$, cf. (71) and (72) ibid.
In particular, if $v$ is $\alpha$-harmonic on open set $D\subset \Rdz$, then $Kv$ is $\alpha$-harmonic on $TD$.
We define the Riesz kernel 
\begin{equation}\label{eq:h}
h(y)=|y|^{\alpha-d},
\end{equation}
and recall that $h$ is $\alpha$-harmonic on $\Rdz$. In fact, $h=K1$.

For $y=(y_1,\ldots,y_{d-1},y_d)\in\Rd$ we let $\ty=(y_1,\ldots,y_{d-1})\in \Rdd$, so that $y=(\ty,y_d)$. 
We consider the shifted cone 
\begin{gather*}
  V_\Theta=\Gamma_\Theta+\frac12\one,
\end{gather*}
and the sphere of radius $1/2$ centered at $\frac12\one$, which we denote by
$$\Sp=\left\{z\in \Rd: |z-\frac12\one|=\frac12\right\},$$
see \autoref{f1a}.
\begin{figure}[t]
\subfloat[cone]{
\label{f1a}
\begin{tikzpicture}[scale=3.1]
  \clip (-1.1,-0.2) rectangle (1.1,2.1);
  \draw (0,0) coordinate (cinv) circle (1);
  \draw (0,0.5) coordinate (c) circle (0.5);
  \draw (-0.42,0.65) node {\small $\Sp$};
  \draw (-1,1) -- (1,1);
  \fill (0,0) circle (0.01) node [below] {\tiny $0$};
  \fill (0,0.5) circle (0.01) node [left] {\tiny $\frac12\one$};
  \fill (0,1) circle (0.012) node [below] {\tiny $\one$};
  \draw[white] (c) +(45:-1.5) -- ++(45:0.5) coordinate (r1) -- ++(45:1);
  \draw[white] (c) +(135:-1.5) -- ++(135:0.5) coordinate (r2) -- ++(135:1);
  \fill[black!80,fill opacity=0.1] (r2) ++(135:1) arc (135:45:1.5) -- (c) -- cycle; 
  \draw[very thick] (r2) arc (135:45:0.5);

  \draw (c) +(60:-1.5) -- ++(60:0.2) coordinate (yinv) -- ++(60:1);;
  \fill (yinv) circle (0.01) node [above left=-2pt] {\tiny $T\!y$};

  \node (kolo) at ({-sqrt(3)},1) [circle through=(cinv)] {};
  \coordinate (y) at (intersection 1 of kolo and 0,0--yinv);
  \draw[densely dashed] (0,0) -- (y);
  \fill (y) circle (0.01) node [above left=-3pt] {\tiny $y$};

  \fill (c) +(60:0.5)  circle (0.014) node [below=2pt] {\tiny $\;\;T\!y^{\!*}$};

  \fill (-0.72,1.38) node {\small $V_\Theta$};
 \fill (-0.73,0.88) node {\small $\Gamma_\Theta$};
  \fill (0.85,0) node {\small $\Sd$};
\fill (-0.9,1.8) node [above] {{$\Rd$}};

  \draw[black] (0,0) +(45:0) -- ++(45:0.25) coordinate (rr1) -- ++(45:1);
  \draw[black] (0,0) +(135:0) -- ++(135:0.25) coordinate (rr2) -- ++(135:1);
\end{tikzpicture}
}
\subfloat[spindle]{
\label{f1b}
\begin{tikzpicture}[scale=3.1]

  \clip (-1.8,-0.17) rectangle (0.75,2.1);
  \draw[black] (0,0) +(45:0) -- ++(45:0.05) coordinate (rr1) -- ++(45:1);
  \draw[black] (0,0) +(135:0) -- ++(135:0.25) coordinate (rr2) -- ++(135:1);

  \draw (0,0) circle (1);
  \fill (0,0) circle (0.01) node [below] {\tiny $0$};
  \fill (0,1) circle (0.012) node [below] {\tiny $\one$};
  \fill (-0.6,1) node [above] {\small $F$};
  \fill (0,2) circle (0.01) node [below] {\tiny $2$}; 
  \fill (-1.1,1.8) node [above] {{$\Rd$}};
  \draw[white,densely dashed] (0,0) -- (intersection of 0,0--r1 and -1,1--1,1) coordinate (p1);
  \draw[white, densely dashed] (0,0) -- (intersection of 0,0--r2 and -1,1--1,1) coordinate (p2);
  \draw[very thick] (p1) -- (p2);
  \draw[white] (p1) +(-1.4142,0) circle (1.4142);
  \draw[white] (p2) +(1.4142,0) circle (1.4142);
  \fill[black!80,opacity=0.1] (0,0) arc (-45:45:1.4142) arc (135:215:1.4142); 
  \draw (p1)+(0,-1)--+(0,1);
\draw (p2)+(0,-1)--+(0,1);
  \draw ({-sqrt(3)},1) circle (2);
  \draw[densely dashed] (0,0) -- (y);
  \fill (y) circle (0.01) node [above left=-2pt] {\tiny $(\widetilde{y},y_d)=y$};

  \coordinate (x) at (intersection 1 of kolo and -1,1--1,1);
  \fill (x) circle (0.012) node [above right=-3pt] {\tiny $y^*=(\widetilde{y^*},1)$};
  \fill (-0.15,0.65) node {\small $L_\varepsilon$}; 
 \fill (0.35,0) node {\small $\Pi_\varepsilon$};

  \clip (-2,0.8) rectangle (1,2.1);
  \draw (-2,1) -- (1,1);
  \draw[very thick] (p1) -- (p2);
  \draw ({-sqrt(3)},1) circle (2);

  \fill ({-sqrt(3)},1) coordinate (r) circle (0.01);
  \draw ({-sqrt(3)},1) node [below] {\tiny $a$};
  \draw[dashed] 
       (r) -- (y) node  [pos=0.5,sloped,below] {\tiny $r$};
\draw (-0.95,1) arc (0:20:0.5);
  \draw (-1.05,1.03) node  [above=-1pt] {\tiny $\gamma$};
 \fill (-0.73,0.88) node {\small $\Gamma_\Theta$};
\end{tikzpicture}
}
\caption{
The shifted cone $V_\Theta$ and its inversion $L_{\varepsilon}$
}\label{fig:1}
\end{figure}
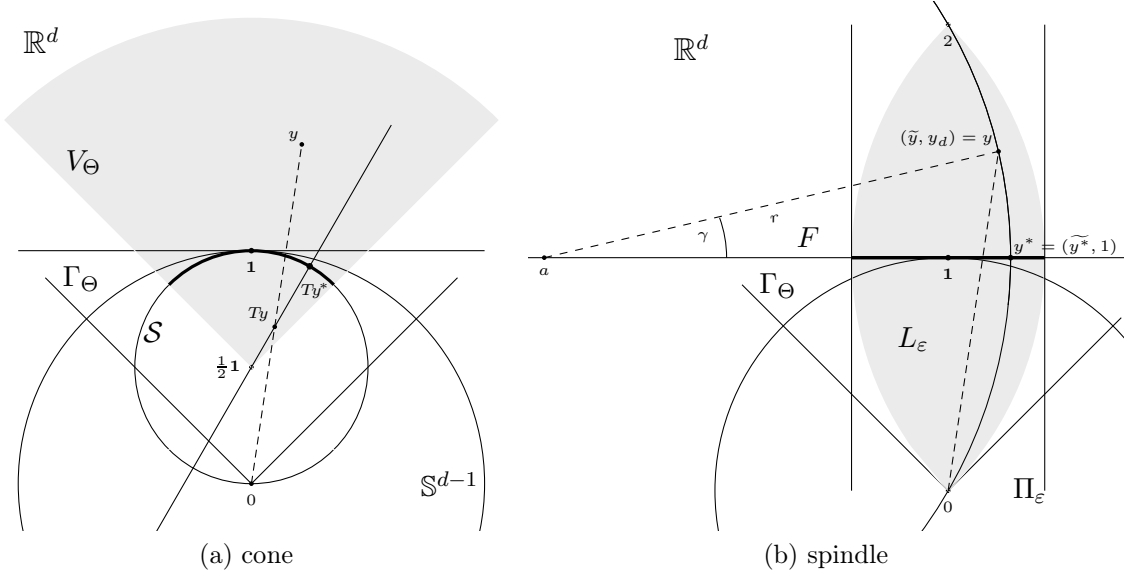
In particular, as shown on \autoref{f1b}, the inversion of $\Sp$ is flat:
$$F:=T\Sp=\{y=(\ty,y_d):y_d=1\}.$$ 
  Let $\varepsilon$ be the radius of $T(\Sp\cap V_\Theta)$, that is
\begin{equation}\label{eoT}
  \varepsilon=\tan \frac{\Theta}{2}.
\end{equation}
Since \autoref{mainthm} is asymptotic, in what follows we may and do assume that $\varepsilon<1/20$.
We consider the cylinder 
$$
\Pi_\varepsilon=\{y=(\ty,y_d):|\ty|< \varepsilon\},
$$
and the spindle-shaped image of $V_\Theta$ by $T$:
$$L_\varepsilon=TV_\Theta,$$ 
see \autoref{f1b}, which is tangent to the boundary of $\Pi_\varepsilon$, cf. \autoref{eoT}. 
For $y\in L_\varepsilon$, we denote by $y^*$ the intersection point of $F$, $\Pi_\varepsilon$ and the circle (or line) passing through $0$, $2\one$ and $y$. Thus, $y^*$ is a curvilinear projection of $y$ on $F$. Equivalently, $Ty^*$ is the intersection point of $\Sp$ and the ray from $\frac12\one$ through $Ty$. Note that $y^*_d=1$. We claim that
\begin{align}\label{qbt}
  \big||\ty|-|\tys|\big|&\le \frac{1}{2} |y-y^*|^2, \quad y\in \Rdz.
\end{align}
Indeed, if $\ty=0$, then $\tys=0$, and \autoref{qbt} is trivial. Else $0$, $2\one$ and $y$ are not collinear, and we let $a$ denote the center of the circle through these points, $r$ its radius and $\gamma$ the angle between the lines $ay$ and $ay^*$. We then observe that $r\ge 1$, $|y-y^*|=2r\sin(\gamma/2)$,
and
$\big||\ty|-|\tys|\big|=r-r\cos \gamma=2r\sin^2(\gamma/2)=
|y-y^*|^2/(2r)\le 
|y-y^*|^2/2$, as claimed, cf. \autoref{f1b}.
We let 
\begin{align*}
  s_\varepsilon(y)=
\Cdmja(\varepsilon^2-|\ty|^2)_+^{\alpha/2},\quad y\in \Rd.
\end{align*}
We have 
\begin{align}\label{flpe}
\Delta^{\alpha/2} s_\varepsilon(y)&=-1, \quad y\in \Pi_\varepsilon.
\end{align}
Indeed, $\widetilde{X_t}$ is the isotropic $\alpha$-stable L\'evy process in $\Rdd$, and
the first exit time of $\widetilde{X_t}$ from $\widetilde B$ is
$s_\varepsilon$,  the same as the expected exit time of $X_t$ from $\Pi_1$.
This yields \eqref{flpe}, cf. \cite[Lemma~5.3 and p. 319]{MR1825645} and \cite[Lemma~3.8]{MR1671973}.
For $x,y\in \Rd$ we have
\begin{equation}\label{mcse}
|s_\ve(y)-s_\ve(x)|\le 2\Cdmja\varepsilon^\alpha||\ty/\ve|-|\tx/\ve||^{\alpha/2}\le 2\Cdmja \ve^{\alpha/2}|y-x|^{\alpha/2}.
\end{equation}
We also define 
\begin{align*}
  s^*_\varepsilon(y)&=
  \begin{cases}
\Cdmja(\varepsilon^2-|\tys|^2)_+^{\alpha/2},
&\quad \text{ if } y\in L_\varepsilon,\\
  0,&\quad\text{otherwise.}
  \end{cases}
\end{align*}

\subsection{Main estimates}
In this section we present a chain of estimates.
As we shall see in \autoref{sec:pmr}, they lead to functions $\phi$ satisfying the assumptions of 
\autoref{betaestimates}, and so yield \autoref{mainthm}.

Recall that $\omega$ is defined in \autoref{omega} and the Riesz kernel $h$ is defined in \autoref{eq:h}.
\begin{lemma}\label{tpcdc}
  For $x\in \Pi_\varepsilon\cap F$ we have
  $\lap (hs_\varepsilon) (x)=-h(x)+O(\omega(\varepsilon))$.
\end{lemma}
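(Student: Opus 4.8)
The plan is to write $\lap(hs_\varepsilon)(x)$ as a singular integral and compare it, term by term, with the known identity $\lap s_\varepsilon(x)=-1$ on $\Pi_\varepsilon$ from \autoref{flpe}. Fix $x\in\Pi_\varepsilon\cap F$, so $x=(\tx,1)$ with $|\tx|<\varepsilon$. Using the decomposition $h(y)s_\varepsilon(y)-h(x)s_\varepsilon(x)=h(x)\big(s_\varepsilon(y)-s_\varepsilon(x)\big)+s_\varepsilon(y)\big(h(y)-h(x)\big)$ under the integral sign in \autoref{eq;deful}, we get
\begin{align*}
\lap(hs_\varepsilon)(x)=h(x)\lap s_\varepsilon(x)+\Ad\int_{\Rd}s_\varepsilon(y)\big(h(y)-h(x)\big)|y-x|^{-d-\alpha}\,dy,
\end{align*}
where the first term equals $-h(x)$ by \autoref{flpe} (the $\varepsilon$-limit for the $s_\varepsilon$ part is legitimate because $s_\varepsilon$ is $C^2$ near $x$ while $h$ is bounded and smooth there, so no principal-value cancellation is lost). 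Thus everything reduces to showing that the remainder integral
\begin{align*}
I(x):=\Ad\int_{\Rd}s_\varepsilon(y)\big(h(y)-h(x)\big)|y-x|^{-d-\alpha}\,dy
\end{align*}
is $O(\omega(\varepsilon))$, uniformly in $x\in\Pi_\varepsilon\cap F$.

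To estimate $I(x)$ I would exploit that $s_\varepsilon$ is supported in the thin cylinder $\Pi_\varepsilon$ and is bounded by $\Cdmja\varepsilon^\alpha$, with the sharper Hölder bound \autoref{mcse} available. Since $h(y)=|y|^{\alpha-d}$ is smooth and has bounded derivatives on a neighborhood of $F\cap\Pi_\varepsilon$ (which stays a bounded distance from the origin, as $\varepsilon<1/20$), on the region $|y-x|\le 1$ we have $|h(y)-h(x)|\lesssim|y-x|$, so that part of the integral is controlled by $\int_{\Pi_\varepsilon,\,|y-x|\le1}\varepsilon^\alpha|y-x|^{1-d-\alpha}\,dy$ — and here the three cases of $\omega$ emerge exactly as in the one-dimensional computation of how far $\int|y-x|^{1-d-\alpha}$ over a cylinder of radius $\varepsilon$ reaches: integrating out the $d-1$ transverse variables over a disk of radius $\varepsilon$ and then the single longitudinal variable produces $\varepsilon^{d-1}\int_0^{O(1)}(\varepsilon+|t|)^{\,?}\,dt$-type integrals whose size is $\varepsilon^\alpha$ for $\alpha<1$, $\varepsilon|\log\varepsilon|$ for $\alpha=1$, and $\varepsilon$ for $\alpha>1$ after multiplying back by the prefactor; combined with the $\varepsilon^\alpha$ from $\|s_\varepsilon\|_\infty$ one checks the total is $O(\omega(\varepsilon))$. (One should double-check that the bound $\|s_\varepsilon\|_\infty\asymp\varepsilon^\alpha$ together with the volume factor $\varepsilon^{d-1}$ does not already give something smaller than $\omega(\varepsilon)$ — but since $\omega(\varepsilon)\ge\varepsilon$ for $\alpha\le1$ and $=\varepsilon$ for $\alpha>1$, and $\varepsilon^{d-1+\alpha}\le\varepsilon$ always, one actually wants to be careful and probably use the cancellation $h(y)-h(x)$ plus the Hölder estimate \autoref{mcse} for $s_\varepsilon$ rather than the crude sup bound, to avoid overcounting.) For the far region $|y-x|>1$ the kernel $|y-x|^{-d-\alpha}$ is integrable against the bounded, compactly supported $s_\varepsilon$ and contributes at most $C\varepsilon^\alpha\cdot\varepsilon^{d-1}=O(\varepsilon^{d-1+\alpha})$, which is negligible compared to $\omega(\varepsilon)$.

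The main obstacle I anticipate is getting the constant/threshold bookkeeping in the near-region integral to land exactly on $\omega(\varepsilon)$ rather than a needlessly weaker bound: one has to use the first-order Taylor cancellation $h(y)-h(x)$ (which gains one power of $|y-x|$) in tandem with the correct estimate on $s_\varepsilon$, and then split the $|y-x|$-integration at scale $\varepsilon$, since below scale $\varepsilon$ the integrand behaves like $\varepsilon^\alpha\cdot|y-x|^{1-d-\alpha}$ over a full $d$-dimensional ball while above scale $\varepsilon$ the support constraint $|\ty|<\varepsilon$ costs a factor $\varepsilon^{d-1}$ and one is left with a one-dimensional integral in the longitudinal direction — and it is precisely this one-dimensional integral $\int_\varepsilon^{1}t^{\,\alpha-2}\,dt$ (times $\varepsilon^{d-1+\alpha-?}$...) that produces the trichotomy $\varepsilon^\alpha/\varepsilon|\log\varepsilon|/\varepsilon$ of \autoref{omega}. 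Managing the exponents here is routine but delicate, so I would organize it as a separate auxiliary computation (of the type the authors defer to \autoref{sec:ug}) and invoke it here.
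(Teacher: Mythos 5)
Your overall strategy (peel off $h(x)\lap s_\varepsilon(x)=-h(x)$ and show the remainder is $O(\omega(\varepsilon))$) is the right one, but your chosen decomposition leaves a remainder you cannot control the way you propose. Writing $h(y)s_\ve(y)-h(x)s_\ve(x)=h(x)[s_\ve(y)-s_\ve(x)]+s_\ve(y)[h(y)-h(x)]$ produces the term $\int s_\ve(y)[h(y)-h(x)]|y-x|^{-d-\alpha}dy$, and your estimate of it near the diagonal — $s_\ve\le C\ve^\alpha$ together with $|h(y)-h(x)|\le C|y-x|$ — gives an integrand of size $\ve^\alpha|y-x|^{1-d-\alpha}$, whose radial integral $\int_0^{\varepsilon}r^{-\alpha}dr$ \emph{diverges} for every $\alpha\ge 1$. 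So the "near region" computation you sketch simply does not close in two of the three cases of \autoref{omega}; the integral exists only as a principal value there, and extracting the needed cancellation from $\nabla h(x)\cdot(y-x)$ would force you to replace $s_\ve(y)$ by $s_\ve(x)+[s_\ve(y)-s_\ve(x)]$ anyway. The clean fix — and what the paper does — is to subtract \emph{both} marginal terms at once: since $h$ is $\alpha$-harmonic away from the origin, $s_\ve(x)\lap h(x)=0$, and
\begin{align*}
\lap(hs_\ve)(x)=h(x)\lap s_\ve(x)+s_\ve(x)\lap h(x)+\Ad\int\frac{[s_\ve(y)-s_\ve(x)][h(y)-h(x)]}{|x-y|^{d+\alpha}}\,dy .
\end{align*}
The symmetric bilinear remainder is absolutely convergent for all $\alpha\in(0,2)$, because \autoref{mcse} gives $|s_\ve(y)-s_\ve(x)|\le C\ve^{\alpha/2}|y-x|^{\alpha/2}$ and $h$ is Lipschitz near $x$, so the integrand is $O(\ve^{\alpha/2}|y-x|^{1-\alpha/2-d})$ with $1-\alpha/2>0$.

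Two further points. First, the trichotomy in $\omega(\varepsilon)$ does not come from where you locate it (the longitudinal integral at scales between $\varepsilon$ and $1$ after using the support constraint $|\ty|<\ve$); in the paper it arises from the set $H\setminus\Pi_\ve$ near the \emph{lateral} boundary of the cylinder, where $s_\ve(y)=0$ but $s_\ve(x)\le C\delta^{\alpha/2}\ve^{\alpha/2}$ with $\delta=\ve-|\tx|$, and $\int_{B(x,1)\setminus B(x,\delta)}|x-y|^{1-d-\alpha}dy$ is bounded, logarithmic in $\delta$, or of order $\delta^{1-\alpha}$ according as $\alpha<1$, $=1$, $>1$. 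Getting a bound uniform in $x$ up to the edge of $\Pi_\ve$ is the actual crux, and your sketch does not engage with it. Second, a minor slip: $s_\ve$ is supported on the \emph{infinite} cylinder $\Pi_\ve$, not compactly, and $h(y)=|y|^{\alpha-d}$ blows up at the origin, which lies inside $\Pi_\ve$; the far region still contributes only $O(\ve^\alpha)=O(\omega(\ve))$ because $h$ is locally integrable and $|y-x|^{-d-\alpha}$ decays, but the justification is not the one you give.
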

\noindent
The proof of \autoref{tpcdc} is given in \autoref{sec:ug}.
\begin{lemma}\label{ohsss}
  For $x\in \Pi_\varepsilon\cap F$ we have
$\lap [h(s_\varepsilon^*-s_\varepsilon)](x)=O(\varepsilon^{\alpha})$.
\end{lemma}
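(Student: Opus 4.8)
\textbf{Proof proposal for \autoref{ohsss}.}

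The plan is to exploit the fact that $x\in \Pi_\varepsilon\cap F$ lies \emph{outside} the support of $h(s_\varepsilon^*-s_\varepsilon)$, or more precisely that on a large neighborhood of $x$ in $F$ the two functions $s_\varepsilon$ and $s_\varepsilon^*$ agree, so that only the far-field contribution survives. Recall $x\in F$ means $x_d=1$, and $s_\varepsilon(x)=s_\varepsilon^*(x)=\Cdmja(\varepsilon^2-|\tx|^2)_+^{\alpha/2}$ since for $y\in F$ one has $y^*=y$ (the curvilinear projection of a point already on $F$ is itself). Hence the integrand $[h(y)(s_\varepsilon^*(y)-s_\varepsilon(y))-h(x)(s_\varepsilon^*(x)-s_\varepsilon(x))]|y-x|^{-d-\alpha}$ has a vanishing "diagonal" term and the principal value in \autoref{eq;deful} reduces to the honest integral $\Ad\int_\Rd h(y)(s_\varepsilon^*(y)-s_\varepsilon(y))|y-x|^{-d-\alpha}\,dy$, with no singularity issue near $y=x$ because $s_\varepsilon^*-s_\varepsilon$ vanishes there to order $\alpha/2\cdot$(something). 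The first step is therefore to make this reduction precise and to record that $s_\varepsilon^*-s_\varepsilon$ is supported in $L_\varepsilon\cup\Pi_\varepsilon$, and that it vanishes on $F$; so effectively we integrate over $(L_\varepsilon\cup\Pi_\varepsilon)\setminus F$, a region that stays a fixed distance $\gtrsim 1$ away from $x$ except in a thin collar near $F$.

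The second step is to bound the integrand. Using \eqref{qbt}, $\big||\ty|-|\tys|\big|\le \frac12|y-y^*|^2$, together with the H\"older-type bound \eqref{mcse} (applied with $s_\varepsilon$ and then with $s_\varepsilon^*$, which is the same profile composed with the curvilinear projection), we get
\begin{align*}
|s_\varepsilon^*(y)-s_\varepsilon(y)|\le 2\Cdmja\,\varepsilon^{\alpha/2}\big||\ty|-|\tys|\big|^{\alpha/2}\le 2\Cdmja\,\varepsilon^{\alpha/2}\Big(\tfrac12|y-y^*|^2\Big)^{\alpha/2}\le C\,\varepsilon^{\alpha/2}|y-y^*|^{\alpha},
\end{align*}
valid for all relevant $y$. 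Moreover $|y-y^*|$ is comparable to $\dist(y,F)$ in the spindle $L_\varepsilon$ near $F$, and both $h(y)=|y|^{\alpha-d}$ and $|y-x|^{-d-\alpha}$ are bounded there. Near $y=x$ the bound gives $|h(y)(s_\varepsilon^*-s_\varepsilon)(y)|\le C\varepsilon^{\alpha/2}|y-x|^{\alpha}$ (since $|y-y^*|\le |y-x|$ in the collar), so $|y-x|^{-d-\alpha}$ times this is $\le C\varepsilon^{\alpha/2}|y-x|^{-d}$, which is still not integrable — so one must instead use $|y-y^*|\lesssim \dist(y,F)$ and integrate $\dist(y,F)^{\alpha}|y-x|^{-d-\alpha}$ over the thin region $\{|\ty|<\varepsilon\}$ near $x$, which after integrating out the $d-1$ transverse variables contributes $O(\varepsilon^{\alpha}\cdot\varepsilon^{d-1}\cdot(\text{something}))$; a careful computation there gives $O(\varepsilon^{\alpha})$ or better. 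Away from the collar, $s_\varepsilon^*-s_\varepsilon$ is supported in a set of volume $O(\varepsilon^{d-1})$ (a thin spindle), the integrand is bounded by $C\varepsilon^{\alpha/2}\cdot\varepsilon^{\alpha}=C\varepsilon^{3\alpha/2}$ pointwise (using $|y-y^*|\lesssim\varepsilon$ away from $F$ where the spindle is thin), so that piece contributes $O(\varepsilon^{3\alpha/2+d-1})=O(\varepsilon^\alpha)$.

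\textbf{Main obstacle.} The delicate point is the collar region near $F$, where both the kernel $|y-x|^{-d-\alpha}$ is singular \emph{and} $|y-y^*|$ can be small, so the naive product bound is borderline non-integrable; the resolution is to replace $|y-y^*|$ by $\dist(y,F)$ (legitimate since the circle/line through $0$, $2\one$, $y$ meets $F$ transversally with angle bounded away from $0$, so $|y-y^*|\asymp \dist(y,F)$ uniformly), and then to carry out the transverse integration in $F$-parallel coordinates, exploiting that $\int \dist(y,F)^{\alpha}|y-x|^{-d-\alpha}\,dy$ over a slab of width $O(\varepsilon)$ around $x$ is finite and of size $O(\varepsilon^{\alpha})$ — this is the fractional analogue of the fact that $t^{\alpha/2}$ is $\alpha/2$-H\"older and $\Delta^{\alpha/2}$ of an $\alpha$-H\"older profile is bounded. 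Everything else is a routine volume-times-sup estimate, and the geometric inputs \eqref{qbt}, \eqref{mcse}, \eqref{eoT} are exactly what is needed to make the constants uniform in $\varepsilon$.
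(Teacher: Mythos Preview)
Your reduction to the honest integral is correct, and your far-field estimates are essentially right. But the heart of the argument --- the collar near $F$ --- has a genuine gap.

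First, the prefactor: combining \eqref{mcse} with \eqref{qbt} as you do gives $|s_\varepsilon - s_\varepsilon^*|(y)\le C\varepsilon^{\alpha/2}|y-y^*|^{\alpha}$, with only $\varepsilon^{\alpha/2}$ in front. The paper obtains the sharper $s_\varepsilon(z)-s_\varepsilon^*(z)\le C\varepsilon^{\alpha}z_d^{\alpha}$ (\autoref{szd}), using instead \eqref{dizzsd} together with $|z^*|\le 2\varepsilon$. Your $\varepsilon^{\alpha/2}$ would already be too weak for the conclusion when $\alpha<1$, since then $\omega(\varepsilon)=\varepsilon^{\alpha}$.

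Second, and more seriously: even the sharper bound $C\varepsilon^{\alpha}z_d^{\alpha}$ is \emph{not} enough to make the collar integral converge. In shifted coordinates with $x_d=0$,
\[
  \int_{\{|\tz|<\varepsilon,\;0<z_d<1/2\}} z_d^{\alpha}\,|x-z|^{-d-\alpha}\,dz
\]
diverges. Integrating first over $|\tz|<\varepsilon$ with $z_d$ fixed gives $\asymp z_d^{-1-\alpha}$ for $0<z_d<2\varepsilon$, and then $\int_0^{2\varepsilon} z_d^{\alpha}\cdot z_d^{-1-\alpha}\,dz_d=\int_0^{2\varepsilon} z_d^{-1}\,dz_d=\infty$. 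So your claim that ``a careful computation there gives $O(\varepsilon^{\alpha})$'' cannot be realized with the pointwise bound $z_d^{\alpha}$ alone; the integrand is borderline non-integrable and no transverse-variable trick rescues it.

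The paper resolves this by a second, structurally different bound (\autoref{hollip}):
\[
  s_\varepsilon(z)-s_\varepsilon^*(z)\le (|z^*|^2-|\tz|^2)\,(\varepsilon^2-|z^*|^2)^{\alpha/2-1},
\]
which trades the H\"older exponent for a full power $z_d^{2}$ at the cost of a boundary-singular weight $(\varepsilon-|z^*|)^{\alpha/2-1}$. This gain from $z_d^{\alpha}$ to $z_d^{2}$ is exactly what defeats the logarithmic divergence --- but only on the subregion $D$ where $\varepsilon-|\tz|$ is not too small. The remainder $V\setminus D$ is then carved into three further pieces $A$, $B$, $E$ according to the relative sizes of $z_d$, $||\tx|-|\tz||$, and $|x-z|$, each handled by a different mechanism (off-center sphere integrals, the constraint $z_d\le |x-z|^{1+\delta}$, and a final delicate computation on the residual set $E$). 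None of this is routine, and it is precisely the content you have elided.
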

\noindent
The proof of \autoref{ohsss} is given in \autoref{sec:ug}.
We define
\begin{equation}\label{shf}
u=K(hs^*_\varepsilon),
\end{equation}
or 
\begin{align}\label{eq:idu}
u(y)=s^*_\varepsilon(Ty).
\end{align}
Note that $u$ is supported on $V_\Theta$ and constant on rays from $\frac{1}{2}\one$, since $s_\varepsilon^*(y)=s^*_\varepsilon(y^*)$ for all $y\in \Rdz$.
If $x\in \Sp\cap V_\Theta$, then $|x-\one/2|=1/2$ and $x^*=x$. Note that $|Tx|=1/|x|$ and, by \autoref{conf}, $|Tx-\one|=|Tx-T\one|=|x-\one|/|x|$. This simplifies \autoref{shf} as follows 
\begin{align*}
  u(x)&=
  \cda(\varepsilon^2-|Tx-\one|^2)_{+}^{\alpha/2}
  =
  \cda\left(\varepsilon^2-\frac{|x-\one|^2}{|x|^2}\right)_{+}^{\alpha/2},\qquad x\in \Rdz.
  \end{align*}
For $\theta\in \Sd$ we define the profile function,
\begin{align}\label{eq:defphi}
  \phi(\theta)&:=2^\alpha u(\theta/2+\one/2)=
  2^\alpha \cda\left(\varepsilon^2-\frac{|\one-\theta|^2}{|\one+\theta|^2}\right)_{+}^{\alpha/2}.
\end{align}
\begin{proposition}\label{lapu}
For 
$\theta\in \Sp\cap V_\Theta$ we have 
\begin{align*}
\lap u(\theta)=
-|\theta|^{-2\alpha}+O(\omega(\varepsilon)).
\end{align*}
\end{proposition}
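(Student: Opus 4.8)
The plan is to transfer \autoref{tpcdc} and \autoref{ohsss} from the flat picture on $F$ to the sphere $\Sp$ via the Kelvin transform identity \autoref{lapKt}. Since $u=K(hs_\varepsilon^*)$ by \autoref{shf}, applying \autoref{lapKt} with $v=hs_\varepsilon^*$ gives
\begin{align*}
\lap u(\theta)=\lap [K(hs_\varepsilon^*)](\theta)=|\theta|^{-\alpha-d}\,\lap(hs_\varepsilon^*)(T\theta),\qquad \theta\in\Rdz.
\end{align*}
So everything reduces to controlling $\lap(hs_\varepsilon^*)(T\theta)$ for $\theta\in\Sp\cap V_\Theta$, equivalently $\lap(hs_\varepsilon^*)(x)$ for $x=T\theta$ ranging over $T(\Sp\cap V_\Theta)$. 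By construction $T\Sp=F$ and $T(\Sp\cap V_\Theta)$ is the disc of radius $\varepsilon$ in $F$, i.e. exactly the set $\Pi_\varepsilon\cap F$ appearing in \autoref{tpcdc} and \autoref{ohsss}.

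The second step is to combine the two lemmas: writing $hs_\varepsilon^*=hs_\varepsilon+h(s_\varepsilon^*-s_\varepsilon)$ and using linearity of $\lap$,
\begin{align*}
\lap(hs_\varepsilon^*)(x)=\lap(hs_\varepsilon)(x)+\lap[h(s_\varepsilon^*-s_\varepsilon)](x)=-h(x)+O(\omega(\varepsilon))+O(\varepsilon^\alpha),\qquad x\in\Pi_\varepsilon\cap F,
\end{align*}
and since $\varepsilon^\alpha\le\omega(\varepsilon)$ for small $\varepsilon$ (indeed $\varepsilon^\alpha=\omega(\varepsilon)$ when $\alpha<1$ and $\varepsilon^\alpha=O(\varepsilon)=O(\omega(\varepsilon))$ when $\alpha\ge 1$, since $\omega(\varepsilon)$ is $\varepsilon$ or $\varepsilon|\log\varepsilon|$), the error term is simply $O(\omega(\varepsilon))$. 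Now substitute $x=T\theta$: we have $h(T\theta)=|T\theta|^{\alpha-d}=|\theta|^{d-\alpha}$ by \autoref{eq:h} and the definition of $T$, and the prefactor is $|\theta|^{-\alpha-d}$. Hence
\begin{align*}
\lap u(\theta)=|\theta|^{-\alpha-d}\big[-|\theta|^{d-\alpha}+O(\omega(\varepsilon))\big]=-|\theta|^{-2\alpha}+|\theta|^{-\alpha-d}O(\omega(\varepsilon)).
\end{align*}

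The last step is to absorb the factor $|\theta|^{-\alpha-d}$ into the $O$: for $\theta\in\Sp\cap V_\Theta$ one has $|\theta-\tfrac12\one|=\tfrac12$, so $|\theta|$ lies between $0$ and $1$, but more precisely $\theta$ is within the narrow cone $V_\Theta$ near $\one$, so $|\theta|$ is bounded away from $0$ (in fact $|\theta|\to 1$ as $\varepsilon\to0$, and $|\theta|\ge c$ for a universal $c>0$ on the relevant cap once $\varepsilon<1/20$). Thus $|\theta|^{-\alpha-d}$ is bounded above by a constant depending only on $d,\alpha$, and $|\theta|^{-\alpha-d}O(\omega(\varepsilon))=O(\omega(\varepsilon))$, giving the claim. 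I expect the only genuine point requiring care — beyond bookkeeping with the Kelvin transform — is this last boundedness of $|\theta|$ away from zero on $\Sp\cap V_\Theta$: one must check that the spindle's inversion really does keep $T\theta$ inside $\Pi_\varepsilon\cap F$ and that $\theta$ stays in a fixed neighborhood of $\one$, which follows from $\varepsilon=\tan(\Theta/2)<1/20$ and the elementary geometry of $\Sp$, $V_\Theta$ recorded around \autoref{eoT} and \autoref{qbt}.
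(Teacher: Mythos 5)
Your proposal is correct and follows the same route as the paper, whose proof of this proposition is just a two-line sketch citing \autoref{tpcdc}, \autoref{ohsss}, the Kelvin transform identity \autoref{lapKt}, and the bound $\varepsilon^\alpha=\BigO(\omega(\varepsilon))$ — exactly the ingredients you assemble. Your fleshed-out version, including the identification $T(\Sp\cap V_\Theta)=\Pi_\varepsilon\cap F$, the computation $|\theta|^{-\alpha-d}h(T\theta)=|\theta|^{-2\alpha}$, and the observation that $|\theta|$ is bounded away from $0$ on $\Sp\cap V_\Theta$, is a faithful expansion of the intended argument.
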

\begin{proof}
  Note that $\varepsilon^\alpha= \BigO(\omega(\varepsilon))$. By \autoref{tpcdc}, \autoref{ohsss},
\autoref{lapKt} and \autoref{qbt} we get the result.
\end{proof}

\begin{corollary}\label{sphericalest}
We have $\lapS \phi(\eta)=-1+\BigO(\omega(\Theta))$ for $\eta\in B_\Theta$.
\end{corollary}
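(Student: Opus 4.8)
The plan is to deduce \autoref{sphericalest} from \autoref{lapu} by tracking how the two transformations relating $\phi$ to $u$ — the dilation $\theta\mapsto\theta/2+\one/2$ and the scaling by $2^\alpha$ — interact with $\lapS$. Recall from \autoref{eq:defphi} that $\phi(\theta)=2^\alpha u(\theta/2+\one/2)$ for $\theta\in\Sd$. First I would note that $u=K(hs^*_\ve)$ is $\lambda$-homogeneous for no single $\lambda$, but it \emph{is} constant on rays emanating from $\tfrac12\one$; equivalently, $\phi$ is the restriction to $\Sd$ of the $0$-homogeneous (about the origin) extension, because $\theta\mapsto\theta/2+\one/2$ maps the sphere $\Sd$ onto $\Sp$ and $u$ is constant along the rays from $\tfrac12\one$ through points of $\Sp$. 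Thus $\phi$ on $\Sd$ carries exactly the same "angular" information as $u$ on $\Sp$, just reparametrized.

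The key computational step is to relate $\lapS\phi(\eta)$, for $\eta\in B_\Theta\subset\Sd$, to $\lap u$ evaluated at the corresponding point of $\Sp$. Write $\Phi$ for the $0$-homogeneous extension of $\phi$ to $\Rdz$; by the remark following \autoref{decomp}, $\lapS\phi(\eta)=\lap\Phi(\eta)$. Now $\Phi$ and $2^\alpha u$ agree on the ray structure: more precisely, since $u$ is constant on rays from $\tfrac12\one$, the function $y\mapsto 2^\alpha u(y)$ restricted to any sphere centered at $\tfrac12\one$ is, after the affine change of variables centered at $\tfrac12\one$ and scaled by $2$, exactly $\Phi$ restricted to a sphere centered at $0$. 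I would therefore apply the translation invariance of $\lap$ (shifting $\tfrac12\one$ to the origin) together with the scaling property \autoref{eq:scfr}: replacing $u$ by its dilate by a factor $2$ about $\tfrac12\one$ multiplies the fractional Laplacian by $2^{-\alpha}$ and evaluates it at the dilated point. Combined with the overall factor $2^\alpha$ in \autoref{eq:defphi}, the two powers of $2$ cancel, giving $\lap\Phi(\eta)=\lap u(\theta)$ where $\theta=\eta/2+\one/2\in\Sp$ is the point corresponding to $\eta$. Since $\eta\in B_\Theta$ forces $\theta\in\Sp\cap V_\Theta$, \autoref{lapu} gives $\lap u(\theta)=-|\theta|^{-2\alpha}+\BigO(\omega(\ve))$.

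It remains to simplify $-|\theta|^{-2\alpha}$ and convert $\BigO(\omega(\ve))$ to $\BigO(\omega(\Theta))$. For $\theta\in\Sp$ we have $|\theta-\tfrac12\one|=\tfrac12$, so $|\theta|^2=\ps\theta\one=1-|\one-\theta|^2/2$; for $\theta\in\Sp\cap V_\Theta$ the chord $|\one-\theta|$ is $\BigO(\ve)$ (it is bounded by roughly $2\ve$ by the tangency of $L_\ve$ and $\Pi_\ve$, or directly from the definition of $\Theta$), hence $|\theta|^{-2\alpha}=1+\BigO(\ve^2)=1+\BigO(\omega(\ve))$. Finally, $\ve=\tan(\Theta/2)=\Theta/2+\BigO(\Theta^3)$, so $\omega(\ve)\asymp\omega(\Theta)$ in each of the three regimes of \autoref{omega}, and we conclude $\lapS\phi(\eta)=-1+\BigO(\omega(\Theta))$ for $\eta\in B_\Theta$.

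The main obstacle I anticipate is the second paragraph: carefully justifying that $\lap\Phi(\eta)=\lap u(\theta)$ under the affine-plus-scaling correspondence, since $\Phi$ is $0$-homogeneous about $0$ while $u$ is "ray-constant" about $\tfrac12\one$ — these are genuinely different symmetries, and one must check that the fractional Laplacian (a nonlocal operator sensitive to the behaviour of the function far away, where the two extensions differ) nonetheless transforms cleanly. The resolution is that $\lap$ is invariant under translations and rigid motions and scales by \autoref{eq:scfr} under dilations, and the affine map $y\mapsto 2(y-\tfrac12\one)$ is precisely a translation composed with a dilation; one applies these to the identity $\phi=2^\alpha u\circ(\text{this map})^{-1}$ on all of $\Rd$, not just on the sphere, so no mismatch in the far-field behaviour arises.
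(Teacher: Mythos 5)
Your proposal is correct and follows essentially the same route as the paper: identify $x\mapsto 2^\alpha u(x/2+\one/2)$ as the $0$-homogeneous extension $\Phi$ of $\phi$ (since $u$ is constant on rays from $\tfrac12\one$), use translation invariance together with \autoref{eq:scfr} so the factors of $2^{\pm\alpha}$ cancel and $\lapS\phi(\eta)=\lap u(\eta/2+\one/2)$, then invoke \autoref{lapu} and absorb $|\theta|^{-2\alpha}-1=\BigO(\ve^2)$ and $\omega(\ve)\asymp\omega(\Theta)$ into the error term. You are in fact more explicit than the paper about the last two reductions; the only blemish is the identity $|\theta|^2=\ps\theta\one=1-|\one-\theta|^2/2$, which for $\theta\in\Sp$ should read $|\theta|^2=\ps\theta\one=1-|\one-\theta|^2$, an inconsequential slip since either version gives $|\theta|^{-2\alpha}=1+\BigO(\ve^2)$.
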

\begin{proof}
By \autoref{lapu} for $\theta\in 
\Sp
\cap V_\Theta$, we have
  \begin{align*}
    \lap u(x)=-1+\BigO(\omega(\varepsilon)).
  \end{align*}
We consider
$x\mapsto 2^\alpha u(x/2+\one/2)$.
  The function is homogeneous of order $0$.
By \autoref{eq:scfr}
    $\lapS \phi(\theta)=
(\lap u)(\theta/2+\one/2)=-1+\BigO(\omega(\Theta))$.
\end{proof}

To verify the assumptions of \autoref{betaestimates} 
we need to
estimate $R_\lambda \phi$, cf. \autoref{eq:rul}.
\begin{lemma}\label{bound3} 
If
$0<\lambda<\alpha<\lambda+1$ and $0<\delta<1$, then for  $c=c(d)$ and $C=C(d)$,
\begin{gather*}
\frac1{\alpha-\lambda}-c\le
u_\lambda(t)-u_0(t)\le                
\frac1{\alpha-\lambda}
+\frac{C}{(\alpha-\lambda)^{1-\delta}}+\frac{C{[{1\vee}} (1-t)^{-(d+\alpha-3)/2}{]}}{(\alpha-\lambda)^{\delta}}, \quad t\in[-1,1).
\end{gather*}
\end{lemma}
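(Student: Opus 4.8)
The plan is to argue directly from the finite integral representation \autoref{eq:ug01}. Subtracting the $\lambda=0$ case gives
\begin{equation*}
u_\lambda(t)-u_0(t)=\int_0^1(1-r^\lambda)\bigl(r^{\alpha-\lambda-1}-r^{d-1}\bigr)(r^2-2rt+1)^{-(d+\alpha)/2}\,dr,
\end{equation*}
and the integrand is nonnegative on $[-1,1)$ because $\alpha-\lambda-1<0\le d-1$ forces $r^{\alpha-\lambda-1}\ge r^{d-1}$ on $(0,1)$. I would split this $r$-integral at $\tfrac12$: the part over $[0,\tfrac12]$ carries the singular main term $\tfrac1{\alpha-\lambda}$, and the part over $[\tfrac12,1]$ carries the $(1-t)$-singularity.

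On $[0,\tfrac12]$ the kernel is tame, $r^2-2rt+1=(r-t)^2+1-t^2\ge(1-r)^2\ge\tfrac14$, hence bounded; the $r^{d-1}$ summand there contributes only $\BigO(1)$. For the $r^{\alpha-\lambda-1}$ summand I would peel off $\int_0^{1/2}r^{\alpha-\lambda-1}\,dr=\tfrac{(1/2)^{\alpha-\lambda}}{\alpha-\lambda}=\tfrac1{\alpha-\lambda}+\BigO(1)$, using $|(1/2)^{\alpha-\lambda}-1|\le(\alpha-\lambda)\log2$, and estimate the remainder $\int_0^{1/2}r^{\alpha-\lambda-1}\bigl[(1-r^\lambda)(r^2-2rt+1)^{-(d+\alpha)/2}-1\bigr]dr$. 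A one-term Taylor expansion in $r$, uniform in $t\in[-1,1]$, shows the bracket is bounded on $[0,\tfrac12]$ and $\BigO(r^{1\wedge\lambda})$ near $0$; after a crude splitting of the $r$-integral (and using $\log\tfrac1{\alpha-\lambda}\le C(\alpha-\lambda)^{-(1-\delta)}$) this remainder is at most $C(\alpha-\lambda)^{-(1-\delta)}$. Keeping $\tfrac1{\alpha-\lambda}$ and bounding the rest, together with nonnegativity of the $[\tfrac12,1]$ contribution, gives the lower bound $\tfrac1{\alpha-\lambda}-c$ and the first two terms of the upper bound.

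On $[\tfrac12,1]$ the kernel is singular as $t\uparrow1$, but the weight vanishes to second order: $1-r^\lambda\le2(1-r)$ (as $\lambda<2$) and $r^{\alpha-\lambda-1}-r^{d-1}\le C(d)(1-r)$ there, by the mean value theorem with a derivative bound uniform over the admissible $\lambda$; hence the integrand is $\le C(1-r)^2(r^2-2rt+1)^{-(d+\alpha)/2}$. With $r=1-u$ one has the identity $r^2-2rt+1=2(1-t)(1-u)+u^2\ge(1-t)+u^2$ for $u\le\tfrac12$, and the rescaling $u=\sqrt{1-t}\,v$ turns the bound into $C(1-t)^{(3-d-\alpha)/2}\int v^2(1+v^2)^{-(d+\alpha)/2}\,dv$. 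When $d+\alpha>3$ the $v$-integral converges, giving $C(1-t)^{-(d+\alpha-3)/2}$; when $d+\alpha<3$ one truncates it at $v\asymp(1-t)^{-1/2}$ and the powers cancel to a constant; so altogether $C[1\vee(1-t)^{-(d+\alpha-3)/2}]$ (the borderline $d+\alpha=3$ needs a logarithmic refinement, consistent with the role of $\omega$ in the eventual application). Since $(\alpha-\lambda)^{-\delta}\ge1$, this fits inside the claimed last term.

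The step I expect to be most delicate is keeping every constant uniform as $\lambda\uparrow\alpha$: the derivative bound for $r^{\alpha-\lambda-1}-r^{d-1}$ near $r=1$, the Taylor remainder for the bracket near $r=0$, and the constant in the $v$-integral all have to stay bounded, and the crude split in the small-$r$ remainder must be organized — threshold a power of $\alpha-\lambda$ chosen in terms of $\delta$ — so that the unavoidable $(\alpha-\lambda)^{-1}$ is traded for the milder $(\alpha-\lambda)^{-(1-\delta)}$. The rest is elementary one-variable estimation.
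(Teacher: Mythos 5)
Your argument is correct and reaches the stated bounds, but by a genuinely different decomposition than the paper's. The paper starts from the same difference formula derived from \autoref{eq:ug01}, and its lower bound is the same in spirit as yours (it shows $(r^2-2rt+1)^{-(d+\alpha)/2}\ge 1-cr$ uniformly in $t\in[-1,1)$ and integrates exactly). For the upper bound, however, the paper splits the $r$-integral at a \emph{$\lambda$-dependent} point $x=(\alpha-\lambda)^\delta$: on $[0,x]$ it crudely bounds the kernel by $(1-x)^{-d-\alpha}$, which produces $\tfrac{1+Cx}{\alpha-\lambda}$ and hence the $(\alpha-\lambda)^{-(1-\delta)}$ term, while on $[x,1]$ it pulls out $r^{\alpha-\lambda-1}\le x^{\alpha-\lambda-1}\le(\alpha-\lambda)^{-\delta}$ and then exploits the same quadratic vanishing of $(1-r^\lambda)(r^{\alpha-\lambda-1}-r^{d-1})$ near $r=1$ that you use, via the substitution $r-1=v\sqrt{2(1-t)}$. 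You instead split at the fixed point $1/2$, Taylor-expand near $r=0$ to extract $\int_0^{1/2}r^{\alpha-\lambda-1}dr=\tfrac1{\alpha-\lambda}+\BigO(1)$, and show the $[1/2,1]$ piece is bounded by $C\,[1\vee(1-t)^{-(d+\alpha-3)/2}]$ \emph{uniformly in} $\lambda$. This buys a slightly stronger, $\delta$-free estimate, namely $\tfrac1{\alpha-\lambda}+C+C[1\vee(1-t)^{-(d+\alpha-3)/2}]$, which implies the stated one since $(\alpha-\lambda)^{-\delta}\ge1$; the paper's $\lambda$-dependent split is precisely what forces the two $\delta$-terms into the statement. (Your appeal to $\log\tfrac1{\alpha-\lambda}\le C(\alpha-\lambda)^{-(1-\delta)}$ is not needed: the $[0,1/2]$ remainder is $\int_0^{1/2}r^{\alpha-\lambda-1}\,\BigO(r^{1\wedge\lambda})\,dr=\BigO(1)$ outright, since $\alpha-\lambda>0$.) One caveat you correctly flag is shared with the paper: at the borderline $d+\alpha=3$ the computation yields $C\log\tfrac1{1-t}$ rather than the constant $C[1\vee(1-t)^{0}]$; the paper's proof produces exactly the same logarithm there and only absorbs it downstream in the proof of \autoref{lem:Rlgd}, so this is a blemish of the lemma's statement rather than a gap in your argument.
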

\noindent
The proof of \autoref{bound3} is
given in \autoref{sec:ug}.

\begin{lemma}\label{lem:intexit} Let $ \widetilde C_{d,\alpha}=\frac12\omega_{d-1}\cda B\left(1+\frac{\alpha}{2},\frac{d-1}{2}\right)$. We have  
  \begin{align*}
    \int_{\Sd} \phi(\theta)\sigma(d\theta)=
    \widetilde C_{d,\alpha}\Theta^{\alpha+d-1}\left[1+\BigO(\Theta^2)\right],\qquad \mbox{ as } \quad \Theta\to 0^{+}.
  \end{align*}
\end{lemma}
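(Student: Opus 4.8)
The plan is to compute $\int_{\Sd}\phi(\theta)\,\sigma(d\theta)$ by unfolding the definition \autoref{eq:defphi} of $\phi$ and passing to a convenient coordinate system on $B_\Theta$. Write $\theta=(\cos\psi)\one+(\sin\psi)\,\omega'$ with $\psi\in[0,\pi]$ the polar angle from $\one$ and $\omega'\in\Sdd$. Then $|\one-\theta|^2=2(1-\cos\psi)$, $|\one+\theta|^2=2(1+\cos\psi)$, so the argument inside the parentheses in \autoref{eq:defphi} equals $\varepsilon^2-\tan^2(\psi/2)$. Since $\phi$ is supported on $B_\Theta$, which is exactly $\{\psi<\Theta\}$, and $\varepsilon=\tan(\Theta/2)$, the integrand is nonzero precisely for $\tan(\psi/2)<\tan(\Theta/2)$, i.e. $\psi<\Theta$, consistent with the support. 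Using the surface-measure disintegration $\sigma(d\theta)=(\sin\psi)^{d-2}\,d\psi\,\sigma_{d-2}(d\omega')$ on $\Sd$, the $\omega'$-integral contributes the factor $\omega_{d-1}=\sigma(\Sdd)$, and we are left with
\[
\int_{\Sd}\phi(\theta)\,\sigma(d\theta)
=2^\alpha\cda\,\omega_{d-1}\int_0^\Theta\Big(\tan^2\tfrac\Theta2-\tan^2\tfrac\psi2\Big)^{\alpha/2}(\sin\psi)^{d-2}\,d\psi.
\]

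Next I would substitute $t=\tan(\psi/2)$, so $\sin\psi=2t/(1+t^2)$, $d\psi=2\,dt/(1+t^2)$, and $\psi$ ranges over $(0,\Theta)$ while $t$ ranges over $(0,\varepsilon)$ with $\varepsilon=\tan(\Theta/2)$. This turns the integral into
\[
\int_0^\varepsilon(\varepsilon^2-t^2)^{\alpha/2}\Big(\frac{2t}{1+t^2}\Big)^{d-2}\frac{2\,dt}{1+t^2}
=2^{d-1}\int_0^\varepsilon\frac{(\varepsilon^2-t^2)^{\alpha/2}\,t^{d-2}}{(1+t^2)^{d-1}}\,dt.
\]
The leading term comes from replacing $(1+t^2)^{-(d-1)}$ by $1$: scaling $t=\varepsilon s$ and recognizing a Beta integral gives $\int_0^\varepsilon(\varepsilon^2-t^2)^{\alpha/2}t^{d-2}\,dt=\tfrac12\varepsilon^{\alpha+d-1}B\big(\tfrac{d-1}{2},1+\tfrac\alpha2\big)$. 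To control the error, expand $(1+t^2)^{-(d-1)}=1+\BigO(t^2)$ uniformly on $(0,\varepsilon)$; the correction integral is then bounded by $C\int_0^\varepsilon(\varepsilon^2-t^2)^{\alpha/2}t^{d}\,dt=\BigO(\varepsilon^{\alpha+d+1})$, which is $\BigO(\varepsilon^2)$ relative to the main term. Assembling the constants, $2^\alpha\cda\,\omega_{d-1}\cdot 2^{d-1}\cdot\tfrac12=2^{\alpha+d-1}\cda\,\omega_{d-1}\cdot\tfrac12$; but one must also convert the $\varepsilon$-expansion into a $\Theta$-expansion via $\varepsilon=\tan(\Theta/2)=\tfrac\Theta2(1+\BigO(\Theta^2))$, so $\varepsilon^{\alpha+d-1}=(\Theta/2)^{\alpha+d-1}(1+\BigO(\Theta^2))$, and the factor $2^{-(\alpha+d-1)}$ recombines with the powers of $2$ to leave precisely $\widetilde C_{d,\alpha}=\tfrac12\omega_{d-1}\cda B\big(1+\tfrac\alpha2,\tfrac{d-1}{2}\big)$ times $\Theta^{\alpha+d-1}[1+\BigO(\Theta^2)]$, using symmetry of $B$ in its arguments.

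The only mildly delicate point is bookkeeping the powers of $2$ and the $\varepsilon\leftrightarrow\Theta$ conversion so that the constant matches $\widetilde C_{d,\alpha}$ exactly; there is no analytic obstacle, since all integrals are elementary Beta integrals and the error estimate is a crude domination. One should double-check that the $\BigO(\Theta^2)$ error is genuinely even in $\Theta$ — it is, because both $\tan(\Theta/2)$ as a function of $\Theta$ and the integrand's $t$-expansion produce only even corrections — so the relative error is $\BigO(\Theta^2)$ rather than $\BigO(\Theta)$, as claimed. A brief remark that the $\omega'$-integral over $\Sdd$ is legitimate because $\phi$ depends on $\theta$ only through $\ps\theta\one$ (hence only through $\psi$) completes the argument.
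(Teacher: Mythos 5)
Your computation is correct and lands on the right constant, but it takes a different route from the paper. The paper parametrizes the cap $B_\Theta$ by the flat disc $\{|w|<\varepsilon\}$ in the hyperplane $F$ via the inversion-based map $W(y)=\frac{2}{|y|^2}y-\one$, computes the surface Jacobian $\sigma(W(dw))=2^{d-1}[1+\BigO(\varepsilon^2)]\,dw$ using the matrix determinant lemma, and exploits the identity $\phi(W(y))=2^\alpha s_\varepsilon(y)$ on $F\cap\Pi_\varepsilon$ to reduce directly to $\int_{|w|<\varepsilon}(\varepsilon^2-|w|^2)^{\alpha/2}\,dw$. You instead work intrinsically on the sphere with polar coordinates $\theta=(\cos\psi)\one+(\sin\psi)\omega'$ and the explicit formula \autoref{eq:defphi}; your substitution $t=\tan(\psi/2)$ is precisely the one-dimensional shadow of the paper's stereographic-type change of variables, and the factor $2^{d-1}(1+t^2)^{-(d-1)}$ you produce is exactly the paper's Jacobian in radial form. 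Both arguments then reduce to the same Beta integral $\int_0^1(1-s^2)^{\alpha/2}s^{d-2}\,ds=\tfrac12 B\left(1+\tfrac{\alpha}{2},\tfrac{d-1}{2}\right)$ and the same conversion $\varepsilon=\tan(\Theta/2)=\tfrac{\Theta}{2}\left(1+\BigO(\Theta^2)\right)$. Your version is more elementary and self-contained (no appeal to the surface-measure formula for parametrized hypersurfaces or to the matrix determinant lemma); the paper's version has the advantage of exhibiting $\phi$ on the cap as literally the exit-time profile $s_\varepsilon$ on the flat disc, which is the geometric picture used throughout Section 2. Your power-of-two bookkeeping and the $\BigO(\Theta^2)$ relative-error analysis both check out.
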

\noindent
The proof of \autoref{lem:intexit} is given in \autoref{sec:ug}.

\begin{lemma}\label{lem:Rlgd}
Let 
$B_{d,\alpha} = \Ad \widetilde C_{d,\alpha}$, as in \eqref{eq:defs}.
For all $\eta\in B_\Theta$ we have
    \begin{align}\label{radiallower}
      R_\lambda\phi(\eta)\ge  B_{d,\alpha}
          \Theta^{d-1+\alpha}\left[1+\BigO(\Theta^2)\right]\frac{1-c
(\alpha-\lambda)}{\alpha-\lambda},
    \end{align}
and, under the condition $0<\delta<1$, 
    \begin{align}\label{radialupper}
      R_\lambda\phi(\eta)\le B_{d,\alpha} \Theta^{d-1+\alpha}\left[1+\BigO(\Theta^2)\right]
        \frac{1+C(\alpha-\lambda)^{\delta}}{\alpha-\lambda}
        +C \frac{\Theta\omega(\Theta)}{(\alpha-\lambda)^{\delta}}.
    \end{align}
\end{lemma}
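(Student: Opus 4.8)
The plan is to combine the pointwise kernel estimate for $u_\lambda-u_0$ from \autoref{bound3} with the integral of the profile $\phi$ from \autoref{lem:intexit}, exploiting the nonnegativity of $\phi$. Recall from \autoref{eq:rul} that
$$R_\lambda\phi(\eta)=\Ad\int_{\Sd}\phi(\theta)\,[u_\lambda(\ps\theta\eta)-u_0(\ps\theta\eta)]\,\sigma(d\theta),$$
and that $\phi\ge 0$ is supported on $B_\Theta$. For the lower bound \autoref{radiallower}, I would simply use the left inequality of \autoref{bound3}, namely $u_\lambda(t)-u_0(t)\ge \frac1{\alpha-\lambda}-c$, which holds uniformly in $t\in[-1,1)$; since $\phi\ge 0$ this gives $R_\lambda\phi(\eta)\ge \Ad\left(\frac1{\alpha-\lambda}-c\right)\int_{\Sd}\phi\,d\sigma$. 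Plugging in \autoref{lem:intexit} and recalling $B_{d,\alpha}=\Ad\widetilde C_{d,\alpha}$ yields \autoref{radiallower} after writing $\frac1{\alpha-\lambda}-c=\frac{1-c(\alpha-\lambda)}{\alpha-\lambda}$.

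For the upper bound \autoref{radialupper}, I would feed the right inequality of \autoref{bound3} into the integral. The first two terms there, $\frac1{\alpha-\lambda}+\frac{C}{(\alpha-\lambda)^{1-\delta}}$, are constant in $t$ and combine as $\frac{1+C(\alpha-\lambda)^{\delta}}{\alpha-\lambda}$; multiplying by $\Ad\int_{\Sd}\phi\,d\sigma$ and invoking \autoref{lem:intexit} produces the main term $B_{d,\alpha}\Theta^{d-1+\alpha}[1+\BigO(\Theta^2)]\frac{1+C(\alpha-\lambda)^{\delta}}{\alpha-\lambda}$. The remaining contribution is
$$\frac{\Ad C}{(\alpha-\lambda)^{\delta}}\int_{\Sd}\phi(\theta)\,[1\vee(1-\ps\theta\eta)^{-(d+\alpha-3)/2}]\,\sigma(d\theta),$$
and the task is to show this integral is $\BigO(\Theta^{\alpha+d})=\BigO(\Theta\,\omega(\Theta))$ (note $\omega(\Theta)\ge\Theta^{\alpha-1}$ in all three regimes up to logs, but in fact one only needs $\Theta^{d+\alpha}\le C\,\Theta\,\omega(\Theta)$, which holds since $\omega(\Theta)\ge \Theta$ for $\alpha\ge 1$ and $\omega(\Theta)=\Theta^\alpha\ge \Theta^{d-1+\alpha}$ for $\alpha<1$ when $d\ge 2$ — here I would just check the needed inequality $\Theta^{d+\alpha}=\BigO(\Theta\,\omega(\Theta))$ holds in each case).

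The main obstacle is estimating this last integral involving the singular weight $(1-\ps\theta\eta)^{-(d+\alpha-3)/2}$. The point is that $\phi(\theta)\asymp\dist(\theta,\Gamma_\Theta^c)^{\alpha/2}$ is supported in the cap $B_\Theta$ of angular radius $\Theta$, so $\theta$ ranges over a set of $\sigma$-measure $\asymp\Theta^{d-1}$ and $\phi\lesssim \Theta^{\alpha/2}\dots$ — more precisely $\phi\lesssim \varepsilon^\alpha\asymp\Theta^\alpha$ by \autoref{eq:defphi}. For $\eta\in B_\Theta$ and $\theta\in B_\Theta$ we have $1-\ps\theta\eta\le 1-\cos(2\Theta)\asymp\Theta^2$, so $(1-\ps\theta\eta)^{-(d+\alpha-3)/2}$ is genuinely large (of order $\Theta^{-(d+\alpha-3)}$) only when $d+\alpha-3>0$, and I would split: where $1-\ps\theta\eta\ge 1$ the weight is $1$ and the bound $\int\phi\,d\sigma\asymp\Theta^{d-1+\alpha}$ suffices; where $1-\ps\theta\eta<1$, I estimate $\phi\le C\Theta^\alpha$ and bound $\int_{B_\Theta}(1-\ps\theta\eta)^{-(d+\alpha-3)/2}\sigma(d\theta)$ by passing to polar coordinates on the sphere around $\eta$, using $1-\ps\theta\eta\asymp|\theta-\eta|^2$ and that the integration set has diameter $\asymp\Theta$; this gives $\int_0^{C\Theta}\rho^{-(d+\alpha-3)}\rho^{d-2}\,d\rho\asymp\Theta^{2-\alpha}$ when the exponent $-(d+\alpha-3)+(d-2)=1-\alpha>-1$, i.e. always. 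Hence this piece is $\lesssim\Theta^\alpha\cdot\Theta^{2-\alpha}=\Theta^2$, which is even better than needed; combined with the $(\alpha-\lambda)^{-\delta}$ prefactor and absorbing constants, the term is $\BigO\!\left(\frac{\Theta^2}{(\alpha-\lambda)^\delta}\right)=\BigO\!\left(\frac{\Theta\,\omega(\Theta)}{(\alpha-\lambda)^\delta}\right)$ since $\omega(\Theta)\ge c\Theta$. (One should double-check the borderline in $\rho$: if $1-\alpha\le -1$, i.e. $\alpha\ge 2$, the integral would diverge, but $\alpha<2$ throughout, so we are safe; similarly the case $d=2$, $\alpha<1$ gives exponent $-(\,d+\alpha-3)/2=(1-\alpha)/2>0$ and the weight is bounded, trivializing the estimate.) Assembling the three contributions gives exactly \autoref{radialupper}.
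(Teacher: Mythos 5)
Your proposal is correct and follows essentially the same route as the paper: apply the pointwise bounds of \autoref{bound3} under the integral defining $R_\lambda\phi$, use \autoref{lem:intexit} for the constant-in-$t$ terms, and control the singular remainder via $\int_{B_\Theta}(1-\ps\theta\eta)^{-(d+\alpha-3)/2}\sigma(d\theta)\asymp\Theta^{2-\alpha}$ together with $\phi\lesssim\Theta^\alpha$ and $\Theta\le\omega(\Theta)$. The paper merely organizes the last step as a case split on $d+\alpha\lessgtr 3$, which your unified polar-coordinate estimate covers.
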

\noindent
The proof of \autoref{lem:Rlgd} is given in \autoref{sec:ug}.

\subsection{Proof of \autoref{mainthm}}\label{sec:pmr}
Let 
\begin{align*}
  \lambda=\alpha-{B_{d,\alpha}}\Theta^{d-1+\alpha}[1+\kappa\omega(\Theta)],
\end{align*}
where $\kappa>0$ shall be defined later.
We let $\delta=(d+\alpha-1)^{-1}$ in \autoref{radialupper} and obtain
\begin{align*}
  R_\lambda\phi(\eta)&\le   \left(1+c_1\Theta^2\right)
    \frac{1+c_2\Theta}{1+\kappa\omega(\Theta)}
    +c_3 \omega(\Theta),\qquad \eta\in B_\Theta.
\end{align*} 
If $0\le a\le b<1$, then $(1+a)/(1+b)\le 1- (b-a)/2$. 
If $\kappa\ge c_2$ and $\kappa\omega(\Theta)< 1$, then
\begin{align*}
R_\lambda\phi(\eta)&\le 1+(-\kappa/2+c_2/2+c_1+c_3)\omega(\Theta),\qquad \eta\in B_\Theta.
  \end{align*}
By \autoref{sphericalest},
\begin{align*}
  \lapS \phi(\eta) &\le
  -1+c_4 \omega(\Theta),\qquad \eta\in B_\Theta.
\end{align*}
Accordingly, we stipulate $\kappa\ge 2c_1+c_2+2c_3+2c_4$. For $\omega(\Theta)< 1/\kappa$ we then have
\begin{align*}
  \lapS \phi(\eta)+R_\lambda\phi(\eta)&\le 0,\qquad \eta\in B_\Theta,
\end{align*}
and \autoref{betaestimates} yields
\begin{align*}
  \beta(\alpha,\Theta) \ge \alpha - {B_{d,\alpha}}\Theta^{d-1+\alpha}[1+\kappa\omega(\Theta)].
\end{align*}
To obtain an opposite bound, we put
\begin{align*}
  \lambda=\alpha-{B_{d,\alpha}}\frac{\Theta^{d-1+\alpha}}{1+\iota\omega(\Theta)},
\end{align*}
and we shall define $\iota>0$ momentarily. By \autoref{radiallower},
\begin{align*}
R_\lambda\phi(\eta)
  &\ge
      \frac{1-c(\alpha-\lambda)}{\alpha-\lambda}  B_{d,\alpha} \Theta^{d-1+\alpha}\left[1+\BigO(\Theta^2)\right]
  \\&\ge
   1-c' \Theta^{d-1+\alpha}+\iota\omega(\Theta)-c''\Theta^2.
\end{align*} 
Recall that $d\ge 2$ and $\Theta\le \omega(\Theta)$. Taking $\iota\ge c'+c''$, by \autoref{sphericalest} we get
\begin{align*}
  \lapS \phi(\eta)+R_\lambda\phi(\eta)\ge0,
\end{align*}
and \autoref{betaestimates} yields
\begin{align*}
  \beta(\alpha,\Theta) \le 
      \alpha - B_{d,\alpha}\Theta^{d-1+\alpha}
      \left[1-\iota\omega(\Theta)/2\right],
\end{align*}
provided $\omega(\Theta)<1/(2\iota)$. This ends the proof of \autoref{mainthm}.
\section{
Technical details
}\label{sec:ug}
We now give proofs of the more technical lemmas from \autoref{sec:prel}, and further results.

\subsection{Proof of \autoref{tpcdc}}
For $x\in \Pi_\varepsilon\cap F$ we have
\begin{align}
\lap(hs_\varepsilon)(x)&=-h(x)+
\left[\lap(h s_\varepsilon)(x)-h(x)\lap s_\varepsilon(x)-s_\varepsilon(x)\lap h(x)\right]\nonumber
\\&=-h(x)+\limeps\Ad\int_{|x-y|\ge \epsilon} 
\frac{[s_\varepsilon(y)-s_\varepsilon(x)][h(y)-h(x)]}{|x-y|^{d+\alpha}}dy.\label{mult}
\end{align}
To analyze the integral in \autoref{mult}, we define the following sets 
\[G=\{y\in\Rd:|\ty|<1/2,|y_d|\le1/2\},\] 
\[H=\{y\in\Rd :|\ty|<1/2, |y_d-1|\le 1/2 \}.\] 
Recall that $s_\varepsilon(y)\le c_{d-1,\alpha}\ve^\alpha$, $y\in \Rd$. 
Observe that $h(x)\le 1$.
For $y\in (G\cup H)^c$ we have $|y|>1/2$ and $|x-y|>c|y|$, hence 
\begin{align*}
  \int_{(G\cup H)^c} \frac{|s_\varepsilon(x)-s_\varepsilon(y)||h(x)-h(y)|}{|x-y|^{d+\alpha}}dy\le
  c \varepsilon^{\alpha}\int_{(G\cup H)^c}  (|y|^{\alpha-d}+1)|y|^{-d-\alpha} dy = c\varepsilon^{\alpha}.
\end{align*}
On $G$ we have $|x-y|\ge 1/2$, and
\begin{align*}
  \int_{G} |s_\varepsilon(x)-s_\varepsilon(y)||h(x)-h(y)||x-y|^{-d-\alpha}dy\le 
  \cda \ve^{\alpha} 2^{d+\alpha} \int_G h(y) dy\le c\varepsilon^{\alpha}.
\end{align*}

Note that $H\subset B(x,1)$.
Let $\delta = \ve-|\tx|$, the distance from $x$ to $\Pi_{\ve}^c$. Then 
$s_\ve(x) \le c\delta^{\alpha/2}\ve^{\alpha/2}$, $x\in\Pi_\ve\cap F$.
We split the integral on $H$ into $H_1= H\setminus\Pi_\ve$ and $ H_2=H\cap\Pi_\ve$. Since $h$ is Lipschitz and $s_\ve(y) = 0$ on $H_1$, we get
\begin{align*}
I_1:&=\int_{H_1} \frac{|s_\varepsilon(x)-s_\varepsilon(y)||h(x)-h(y)|}{|x-y|^{d+\alpha}}dy
  \le 
c\delta^{\alpha/2}\ve^{\alpha/2}\int_{H_1} \frac{1}{|x-y|^{d-1+\alpha}}dy
\\&\le c\delta^{\alpha/2}\ve^{\alpha/2}\int_{B(x,1)\setminus B(x,\delta)} \frac{1}{|x-y|^{d-1+\alpha}}dy
\end{align*}
If $\alpha<1$, then the last integral is bounded by 
\[ \int_{B(x,1)} \frac{1}{|x-y|^{d-1+\alpha}}dy \le c\]
and, $\delta\le \ve$, implies $I_1\le c\ve^{\alpha}$.
If $\alpha>1$, then
\begin{align*}
I_1 &\le c\delta^{\alpha/2}\epsilon^{\alpha/2}
\int_{B(x,\delta)^c}|x-y|^{-d-\alpha+1}dy 
\\& \le 
c\delta^{\alpha/2}\epsilon^{\alpha/2} \delta^{1-\alpha}
\\& = c\delta^{1-\alpha/2}\epsilon^{\alpha/2}.
\\& \le c\ve^{1-\alpha/2}\epsilon^{\alpha/2} = c\ve.
\end{align*}
Finally, for $\alpha = 1$ we have 
\begin{align*}
I_1 &\le 
c\delta^{\alpha/2}\ve^{\alpha/2}
\int_{B(x,1)\setminus B(x,\delta)} |x-y|^{-d-\alpha+1}dy
\\& \le c\delta^{\alpha/2}\ve^{\alpha/2}|\ln\delta|.
\end{align*}
But $\sqrt\delta|\ln\delta|$ is increasing on $0<\delta<e^{-2}$, while we have $\delta\le \varepsilon<1/20$. Hence we can replace $\delta$ with $\varepsilon$ in the last line. Summarizing, for any $\alpha\in(0,2)$ we have $I_1\le c\omega(\varepsilon)$.

Recall that by \eqref{mcse}, for any $x$ an $y$, we have $|s_\ve(x) - s_\ve(y)|\le c\ve^{\alpha/2}|x-y|^{\alpha/2}$. Hence, 
\begin{align*}
  \int_{H_2} \frac{|s_\varepsilon(x)-s_\varepsilon(y)||h(x)-h(y)|}{|x-y|^{d+\alpha}}dy  &\le \int_{B(x,1)} \frac{|s_\varepsilon(x)-s_\varepsilon(y)||h(x)-h(y)|}{|x-y|^{d+\alpha}}dy
 \\ & \le 
 c\ve^{\alpha/2}\int_{B(x,1)} |x-y|^{-d+(1-\alpha/2)}dy 
\\&\le c\ve^{\alpha/2}\delta^{1-\alpha/2}\le c \varepsilon.
\end{align*}
But $\varepsilon^\alpha\le \omega(\varepsilon)$ and $\varepsilon\le \omega(\varepsilon)$, ending the proof of \autoref{tpcdc}.

\subsection{Proof of \autoref{ohsss}}

For $x\in \Pi_\varepsilon\cap F$ we have $x^*=x$ and $s_\varepsilon^*(x)-s_\varepsilon(x)=0$. Furthermore, for $y\in\Pi_\ve$ we have $s_\varepsilon^*(y) \le s_\varepsilon(y)$.
Therefore, 
\begin{align}
\lap [h(s_\varepsilon^*-s_\varepsilon)](x)&=
\limeps\Ad\int_{\Pi_\varepsilon\cap\{|x-y|\ge\epsilon\}} h(y)[s_\varepsilon(y)-s^*_\varepsilon(y)]|x-y|^{-d-\alpha}dy\nonumber\\
&=
\Ad\int_{\Pi_\varepsilon} h(y)[s_\varepsilon(y)-s^*_\varepsilon(y)]|x-y|^{-d-\alpha}dy
\label{bezPV}.
\end{align}
Before we estimate this last integral we need to introduce a new geometric context.

We simplify the notation by centering at $0$, so that $F$ becomes $\{z\in\Rd : z_d=0\}$ and can be identified with $\Rdd$. 
Namely, if $z\in B(0,1)$, then we consider
the circle (or line) 
passing through $\one$, $-\one$, and $z$, which intersects the hyperplane $\{z_d=0\}$ at $z^*=(\tzs,0)$, a unique point in $B(0,1)$.
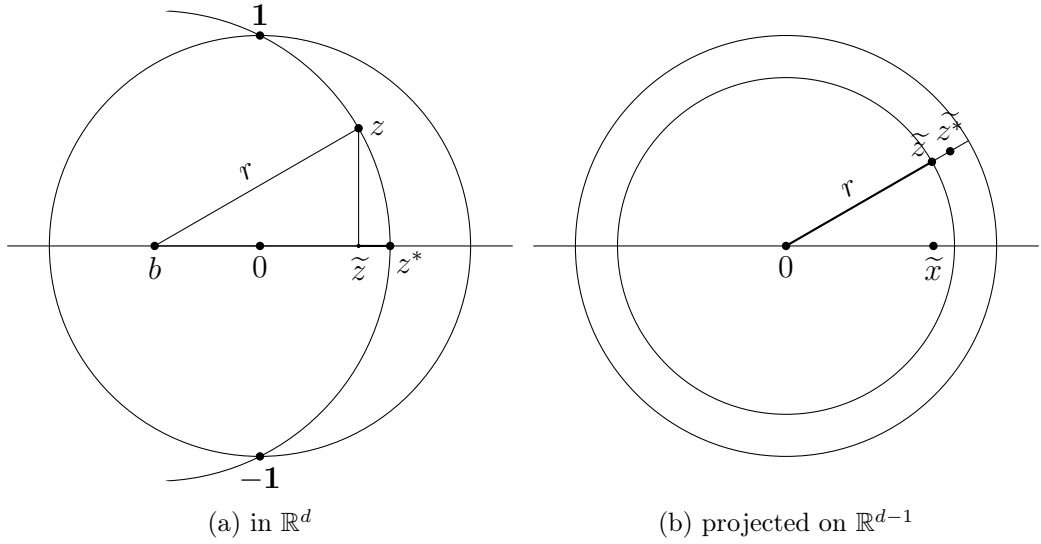
\begin{figure}[t]
\centering
\subfloat[in $\Rd$]{
\begin{tikzpicture}[scale=2.8]
  \clip (0,0) circle (1.2);
  \draw (-2,0) -- (2,0);
  \draw (0,0) circle (1);
  \fill (0,1) circle (0.02) node [above] {$\one$};
  \fill (0,-1) circle (0.02) node [below] {$-\one$};
  \fill (0,0) circle (0.02) node [below] {$0$};
  \fill (-1/2,0) circle (0.02) node [below] {$b$};
  \fill ({-1/2+sqrt(5)/2},0) coordinate (xs) circle (0.02) node [below right=-2pt] {$z^*$};
  \draw (-1/2,0) coordinate (b) circle ({sqrt(5)/2});
  \draw (b) -- ++({sqrt(15)/4},{sqrt(5)/4}) coordinate (x) node [pos=0.5,sloped,above] {$r$};
  \fill (x) circle (0.02) node [right] {$z$};
  \draw (b) -- ++({sqrt(15)/4},0) coordinate (tx);
  \fill (tx) circle (0.01) node [below] {$\tz$};
  \draw (x) -- (tx); 
  \draw[thick] (tx) -- (xs);
\end{tikzpicture}
\label{dzzsa}

}
\subfloat[projected on $\Rdd$]{
\begin{tikzpicture}[scale=2.8]
  \clip (0,0) circle (1.2);
 \draw (0,0) circle (1);
  \fill (0,0) circle (0.02) node [below] {$0$};
  \fill (0.7,0) circle (0.02) node [below] {$\widetilde x$};
  \fill (30:0.8) circle (0.02) node [above left=-2pt] {$\widetilde z$};
  \draw (0,0) circle (0.8);
  \fill (30:0.9) circle (0.02) node [above] {$\widetilde{z^*}$};
  \draw (-2,0) -- (2,0);
  \draw (0,0) -- (30:1);
  \draw[thick] (0,0) -- (30:0.8) node[pos=0.5,above,sloped] {$r$};
\end{tikzpicture}
\label{dzzsb}
}
\caption{Coordinates $z_d$, $z^*$.}
\end{figure}
The situation is shown on \autoref{dzzsa}.  
We denote by $r$ and
$b=(\widetilde b,0)$ the radius and the center of the circle. Namely,
\begin{align}\label{dbr}
  b=-z^*\frac{1-|z^*|^2}{2|z^*|^2},\qquad
  r=\frac{1+|z^*|^2}{2|z^*|},
\end{align}
because direct verification gives that
\begin{align*}
  r^2 = |b-\one|^2 = |b-z^*|^2 = |b-z|^2 = |b+\one|^2.
\end{align*}
Of course, $r>1$. In particular,
$r^2=|b-\widetilde z|^2+z_d^2=(|b|+|\widetilde z|)^2+z_d^2$,
and so
$|\widetilde z|=\sqrt{r^2-z_d^2}-|b|$.
We see that
\begin{align*}
  |z^*|-|\widetilde z|=|z^*|+|b|-\sqrt{r^2-z_d^2}=r-\sqrt{r^2-z_d^2}=
  \frac{z_d^2}{r+\sqrt{r^2-z_d^2}}.
\end{align*}
By \autoref{dbr}, $ |z^*|/\sqrt{2} \le 1/r \le \sqrt{2}|z^*|$. Therefore,
\begin{equation}\label{dizzs} 
z_d^2|z^*|/(2\sqrt{2})\le |z^*|-|\widetilde z| \le \sqrt{2}\,z_d^2|z^*|,\qquad z\in B(0,1).
\end{equation}
We note that $|z^*|^2-|\tz|^2=(|z^*|+|\tz|)(|z^*|-|\tz|)$, therefore
\begin{equation}\label{dizzsd} 
z_d^2|z^*|^2/(2\sqrt{2})\le |z^*|^2-|\widetilde z|^2 \le 2\sqrt{2}\,z_d^2|z^*|^2,\qquad z\in B(0,1).
\end{equation}
\begin{lemma}\label{hollip} For $z\in L_\ve$ we have
\begin{align*}
  (\varepsilon^2-|\widetilde z|^2)^{\alpha/2}-(\varepsilon^2-|z^*|^2)^{\alpha/2}&=
  \varepsilon^{\alpha}\left(\left[1-(|\widetilde z|/\varepsilon)^2\right]^{\alpha/2}-\left[1-(|z^*|/\varepsilon)^2\right]^{\alpha/2}\right)
  \\&\le
  (|z^*|^2-|\widetilde z|^2) \left(\left[\varepsilon^2-|z^*|^2\right]^{\alpha/2-1}\wedge \left[|z^*|^2-|\widetilde z|^2\right]^{\alpha/2-1}\right).
\end{align*}
\end{lemma}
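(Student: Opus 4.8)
The plan is to prove the elementary inequality $|b^{\alpha/2}-a^{\alpha/2}|\le (b-a)\bigl(a^{\alpha/2-1}\wedge (b-a)^{\alpha/2-1}\bigr)$ for $0\le a\le b$ and then apply it with $a=\varepsilon^2-|z^*|^2$ and $b=\varepsilon^2-|\widetilde z|^2$, noting that $b-a=|z^*|^2-|\widetilde z|^2\ge 0$ by \autoref{dizzsd} (so the ordering $a\le b$ is correct), and that both $a\ge 0$ and $b\ge 0$ because $z\in L_\varepsilon$ forces $|z^*|\le\varepsilon$ (by definition of $L_\varepsilon$ and the fact that $z^*$ lands in the disk of radius $\varepsilon$), and a fortiori $|\widetilde z|\le|z^*|\le\varepsilon$ again by \autoref{dizzsd}. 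The first displayed equality in the statement is just the algebraic factorization $\varepsilon^2-|\widetilde z|^2=\varepsilon^2(1-(|\widetilde z|/\varepsilon)^2)$, which needs no comment beyond $\varepsilon>0$.

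For the scalar inequality I would argue in two ways depending on which term in the minimum we want. Since $0<\alpha/2<1$, the function $t\mapsto t^{\alpha/2}$ is concave and increasing on $[0,\infty)$, so by concavity $b^{\alpha/2}-a^{\alpha/2}\le (b-a)\cdot \frac{d}{dt}t^{\alpha/2}\big|_{t=a}=(\alpha/2)(b-a)a^{\alpha/2-1}\le (b-a)a^{\alpha/2-1}$, which gives the first term in the wedge. For the second term, I would use that $t\mapsto t^{\alpha/2}$ is subadditive (again because it is concave with value $0$ at the origin), so $b^{\alpha/2}=(a+(b-a))^{\alpha/2}\le a^{\alpha/2}+(b-a)^{\alpha/2}$, hence $b^{\alpha/2}-a^{\alpha/2}\le (b-a)^{\alpha/2}=(b-a)\cdot(b-a)^{\alpha/2-1}$. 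Taking the minimum of the two bounds yields the claimed estimate. Writing $a=\varepsilon^2-|z^*|^2$, $b=\varepsilon^2-|\widetilde z|^2$, $b-a=|z^*|^2-|\widetilde z|^2$ finishes the proof.

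There is essentially no main obstacle here; the only point requiring a moment's care is the sign/ordering bookkeeping, i.e. confirming that on $L_\varepsilon$ one indeed has $0\le|\widetilde z|\le|z^*|\le\varepsilon$ so that both $(\varepsilon^2-|z^*|^2)_+$ and $(\varepsilon^2-|\widetilde z|^2)_+$ are attained without the positive-part truncation and the two exponential expressions are genuinely well-defined powers rather than powers of negative numbers (recall $\alpha/2-1<0$, so we also need $a>0$, i.e. $|z^*|<\varepsilon$, which holds for $z$ in the interior of $L_\varepsilon$; on the boundary the left side is $0$ and the inequality is trivial). Once that is in place, the two-line concavity/subadditivity argument gives the result.

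\begin{proof}
The first equality is the identity $\varepsilon^2-|\widetilde z|^2=\varepsilon^2\bigl(1-(|\widetilde z|/\varepsilon)^2\bigr)$ and likewise for $|z^*|$. For the inequality, set $a=\varepsilon^2-|z^*|^2$ and $b=\varepsilon^2-|\widetilde z|^2$. By \autoref{dizzsd} we have $b-a=|z^*|^2-|\widetilde z|^2\ge 0$, and since $z\in L_\varepsilon$ we have $|\widetilde z|\le|z^*|\le\varepsilon$, so $0\le a\le b$. If $a=0$ the left side vanishes and there is nothing to prove, so assume $a>0$. The function $t\mapsto t^{\alpha/2}$ is concave and increasing on $(0,\infty)$ because $0<\alpha/2<1$. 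Concavity gives
\[
b^{\alpha/2}-a^{\alpha/2}\le \frac{\alpha}{2}(b-a)\,a^{\alpha/2-1}\le (b-a)\,a^{\alpha/2-1},
\]
while subadditivity of $t\mapsto t^{\alpha/2}$ gives
\[
b^{\alpha/2}=\bigl(a+(b-a)\bigr)^{\alpha/2}\le a^{\alpha/2}+(b-a)^{\alpha/2},
\]
hence $b^{\alpha/2}-a^{\alpha/2}\le (b-a)^{\alpha/2}=(b-a)(b-a)^{\alpha/2-1}$. Taking the minimum of the two right-hand sides and substituting $b-a=|z^*|^2-|\widetilde z|^2$, $a=\varepsilon^2-|z^*|^2$ yields the claim.
\end{proof}
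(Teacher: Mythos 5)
Your proof is correct and takes essentially the same route as the paper: both reduce the lemma to the scalar inequality $b^{\alpha/2}-a^{\alpha/2}\le (b-a)\bigl(a^{\alpha/2-1}\wedge (b-a)^{\alpha/2-1}\bigr)$ for $0\le a\le b$, which the paper obtains from the integral representation $t^{\alpha/2}=\tfrac{\alpha}{2}\int_0^t r^{\alpha/2-1}\,dr$ while you use the equivalent concavity and subadditivity of $t\mapsto t^{\alpha/2}$. Your explicit check that $0\le|\widetilde z|\le|z^*|\le\varepsilon$ on $L_\varepsilon$ is bookkeeping the paper leaves implicit, and it is correct.
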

\begin{proof}
Recall that $0<\alpha<2$. We now analyze H\"older continuity of $s_\ve$. Note that
\begin{align*}
  t^{\alpha/2}=\frac{\alpha}{2}\int_0^t r^{\alpha/2-1}dr, \quad t\ge 0.
\end{align*}
Therefore, if $0<t<s$, then
\begin{align*}
  t^{\alpha/2}-s^{\alpha/2}=s^{\alpha/2}\frac{\alpha}{2}\int_1^{t/s} r^{\alpha/2-1}dr\le s^{\alpha/2}\frac{\alpha}{2}\int_1^{t/s} 1dr=
  \frac{\alpha}{2}(t-s)s^{\alpha/2-1},\\
  t^{\alpha/2}-s^{\alpha/2}=\frac{\alpha}{2}\int_s^t r^{\alpha/2-1}dr\le 
  \frac{\alpha}{2}\int_0^{t-s} r^{\alpha/2-1}dr = 
  	(t-s)^{\alpha/2},
\end{align*}
and so
\begin{align*}
  t^{\alpha/2}-s^{\alpha/2}\le
  (t-s)\left(s^{\alpha/2-1}\wedge (t-s)^{\alpha/2-1}  \right).
\end{align*} 
\end{proof}
We remark that the above is sharp, meaning that a proportional lower bound holds, too.
Indeed, let $f(x)=x^{\alpha/2}-1$ if $x\ge 1$. If $x\in[1,2]$, then $f'(x)=(\alpha/2) x^{\alpha/2-1}\ge \alpha/4$, and so $f(x)\geq \alpha(x-1)/4$.
Note that $2^{-\alpha/2} - 1 \ge \alpha/4$.
If $x\ge 2$,  then $f(x)\ge x^{\alpha/2}-(x/2)^{\alpha/2}=
x^{\alpha/2}(1-2^{-\alpha/2})\ge (x-1)^{\alpha/2}2^{-\alpha/2}(2^{\alpha/2}-1)\ge (x-1)^{\alpha/2}\alpha/8$. Therefore,
\begin{align*}
  t^{\alpha/2}-s^{\alpha/2}
  &= s^{\alpha/2}\left[ \left(\frac{t}{s}\right)^{\alpha/2}-1\right] 
 \ge \frac{\alpha}{8}\,
  s^{\alpha/2}\left[\left(\frac{t}{s}-1\right)\wedge 
  \left(\frac{t}{s}-1\right)^{\alpha/2}\right] \\
  &=\frac{\alpha}{8}\,(t-s)\left[s^{\alpha/2-1}\wedge (t-s)^{\alpha/2-1}  \right].
\end{align*}

When $L_\ve$ is centered at 0, the integral \autoref{bezPV} becomes
\begin{align}\label{bezPV1}
\int_{\Pi_\ve} h(z+\one)[s_\varepsilon(z)-s^*_\varepsilon(z)]|x-z|^{-d-\alpha}dz. 
\end{align}
Recall that for $z\in\Rd$ we consider the decomposition $z=(\tz,z_d)$. We split this integral by considering subsets of $\Pi_\ve$. Let $U= \Pi_\ve\cap(\{z_d\le -3/2\} \cup  \{z_d \ge 1/2 \})$, $V = \Pi_\ve\cap\{0\le z_d\le 1/2\}$, $V_s = \Pi_\ve\cap\{-1/2 \le z_d\le 0\}$ and $W = \Pi_\ve\cap\{-3/2 \le z_d\le -1/2\}$.
Observe that for $z\in U$ we have $|x-z|\asymp z_d$, 
the function $h(z+\one)$ is bounded and $|s_\varepsilon^*(z)-s_\varepsilon(z)|\le C\ve^\alpha$ so that the integral over $U$ is clearly $O(\ve^\alpha)$. On $W$ the function $h$ is integrable and the rest of the integrand is bounded by $C\ve^\alpha$. This leaves us with the hard part, i.e. the integral over $V\cup V_1$.
Observe that on the latter set the function $h(z+\one) = |z+\one|^{\alpha-d}$ is bounded.
Hence, it is enough to estimate 
\[  \int_V \left[s_\varepsilon(z)-s^*_\varepsilon(z)\right]|x-z|^{-d-\alpha}dz \]
(by symmetry, the integral over $V_1$ enjoys the same upper bound). For some $z\in V$,  the point $z^*$ is outside of $\Pi_\varepsilon$. Still,
\begin{align*}
|z^*|=|z^*|-|\widetilde z|+|\widetilde z|\le \sqrt{2}z_d^2|z^*|+\varepsilon\le |z^*|/2+\varepsilon.
\end{align*}
Hence $|z^*|\le 2\varepsilon$.
Therefore, by \autoref{dizzsd}
\begin{align}\label{zandh}
  |z^*|^2-|\widetilde z|^2\le \sqrt{2}z_d^2|z^*|^2\le 4z_d^2\varepsilon^2.
\end{align}

\begin{lemma}\label{szd}
  For any $z\in V$, we have
  \begin{align*}
  s_\varepsilon(z)-s_\varepsilon^*(z)\le c\varepsilon^\alpha z_d^\alpha.
  \end{align*}
\end{lemma}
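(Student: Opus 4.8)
The goal of Lemma~\ref{szd} is to bound $s_\varepsilon(z) - s_\varepsilon^*(z) = \cda\big[(\varepsilon^2-|\widetilde z|^2)_+^{\alpha/2} - (\varepsilon^2-|z^*|^2)_+^{\alpha/2}\big]$ from above by $c\varepsilon^\alpha z_d^\alpha$ for $z\in V$. The plan is to apply Lemma~\ref{hollip}, which gives the pointwise estimate
\[
(\varepsilon^2-|\widetilde z|^2)^{\alpha/2}-(\varepsilon^2-|z^*|^2)^{\alpha/2}\le (|z^*|^2-|\widetilde z|^2)\Big(\big[\varepsilon^2-|z^*|^2\big]^{\alpha/2-1}\wedge \big[|z^*|^2-|\widetilde z|^2\big]^{\alpha/2-1}\Big),
\]
and then to feed in the two geometric bounds already established: the estimate \eqref{zandh}, $|z^*|^2-|\widetilde z|^2\le 4z_d^2\varepsilon^2$, and $|z^*|\le 2\varepsilon$. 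The key point is that the minimum in Lemma~\ref{hollip} lets us choose whichever factor is smaller depending on the sign of $\alpha/2-1$, i.e.\ depending on whether $\alpha<2$ makes $t\mapsto t^{\alpha/2-1}$ decreasing (always, since $\alpha<2$). Since the exponent $\alpha/2-1$ is negative, $t^{\alpha/2-1}$ is decreasing in $t$, so of the two candidates $\varepsilon^2-|z^*|^2$ and $|z^*|^2-|\widetilde z|^2$, the one that is \emph{larger} gives the \emph{smaller} value of $t^{\alpha/2-1}$, hence the better bound; but here we should in fact use the second candidate $|z^*|^2-|\widetilde z|^2$, because it telescopes with the leading factor.

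Concretely, I would write
\[
(\varepsilon^2-|\widetilde z|^2)^{\alpha/2}-(\varepsilon^2-|z^*|^2)^{\alpha/2}\le (|z^*|^2-|\widetilde z|^2)\cdot\big[|z^*|^2-|\widetilde z|^2\big]^{\alpha/2-1}=\big(|z^*|^2-|\widetilde z|^2\big)^{\alpha/2},
\]
and then apply \eqref{zandh} to get $\big(|z^*|^2-|\widetilde z|^2\big)^{\alpha/2}\le (4z_d^2\varepsilon^2)^{\alpha/2}=4^{\alpha/2}\varepsilon^\alpha z_d^\alpha$. Multiplying through by $\cda$ absorbs the constant, giving $s_\varepsilon(z)-s_\varepsilon^*(z)\le c\varepsilon^\alpha z_d^\alpha$ as claimed. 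One caveat I would check carefully: the right-hand side of Lemma~\ref{hollip} as stated assumes $z\in L_\varepsilon$, i.e.\ $|z^*|<\varepsilon$, so that $\varepsilon^2-|z^*|^2>0$; for $z\in V$ with $z^*\notin\Pi_\varepsilon$ we have $s_\varepsilon^*(z)=0$ and we instead need the bound $(\varepsilon^2-|\widetilde z|^2)_+^{\alpha/2}\le (|z^*|^2-|\widetilde z|^2)^{\alpha/2}$ directly --- but this follows because when $|z^*|\ge\varepsilon$ we have $\varepsilon^2-|\widetilde z|^2\le |z^*|^2-|\widetilde z|^2$, so raising to the power $\alpha/2$ preserves the inequality. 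So the argument splits into two trivial cases depending on whether $z^*\in\Pi_\varepsilon$, and in both the conclusion follows from \eqref{zandh}.

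I do not expect a genuine obstacle here; the lemma is essentially a bookkeeping corollary of Lemma~\ref{hollip} and the geometric inequality \eqref{zandh}, both already in hand. The only mild subtlety is making sure the telescoping choice in the minimum is the right one and that the case $z^*\notin\Pi_\varepsilon$ is handled, as noted above. I would therefore keep the proof to a few lines: invoke Lemma~\ref{hollip} (or its elementary substitute when $s_\varepsilon^*(z)=0$), substitute \eqref{zandh}, and collect constants.

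\begin{proof}
By \eqref{zandh} we have $|z^*|^2-|\widetilde z|^2\le 4z_d^2\varepsilon^2$. If $z^*\in \Pi_\varepsilon$, then $z\in L_\varepsilon$ and Lemma~\ref{hollip} gives, using that $[|z^*|^2-|\widetilde z|^2]^{\alpha/2-1}$ is one of the two factors in the minimum,
\begin{align*}
(\varepsilon^2-|\widetilde z|^2)^{\alpha/2}-(\varepsilon^2-|z^*|^2)^{\alpha/2}
&\le (|z^*|^2-|\widetilde z|^2)\,[|z^*|^2-|\widetilde z|^2]^{\alpha/2-1}\\
&=(|z^*|^2-|\widetilde z|^2)^{\alpha/2}\le 4^{\alpha/2}\varepsilon^\alpha z_d^\alpha.
\end{align*}
If $z^*\notin \Pi_\varepsilon$, then $s_\varepsilon^*(z)=0$, $|z^*|\ge \varepsilon$, and $\varepsilon^2-|\widetilde z|^2\le |z^*|^2-|\widetilde z|^2$, so
\begin{align*}
(\varepsilon^2-|\widetilde z|^2)_+^{\alpha/2}\le (|z^*|^2-|\widetilde z|^2)^{\alpha/2}\le 4^{\alpha/2}\varepsilon^\alpha z_d^\alpha.
\end{align*}
In either case, multiplying by $\cda$ yields $s_\varepsilon(z)-s_\varepsilon^*(z)\le c\varepsilon^\alpha z_d^\alpha$.
\end{proof}
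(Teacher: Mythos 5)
Your proof is correct and follows essentially the same route as the paper: split $V$ into $V\cap L_\varepsilon$, where Lemma~\ref{hollip} with the telescoping factor gives $s_\varepsilon(z)-s_\varepsilon^*(z)\le C(|z^*|^2-|\widetilde z|^2)^{\alpha/2}$, and $V\setminus L_\varepsilon$, where $s_\varepsilon^*(z)=0$ and $\varepsilon\le|z^*|$ gives the same bound directly; then conclude by \eqref{zandh}. No gaps.
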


\begin{proof}
On $V\setminus L_\ve$, we have $|z^*|>\varepsilon$. Hence by \autoref{zandh}
\begin{align*}
  s_\varepsilon(z)-s^*_\varepsilon(z)=s_\varepsilon(z) 
=C(\varepsilon^2-|\tz|^2)^{\alpha/2}
  \le C(|z^*|^2-|\tz|^2)^{\alpha/2}
  \le C\varepsilon^\alpha z_d^{\alpha}
\end{align*}
On $V\cap L_\ve$, \autoref{hollip} and \autoref{zandh} imply
\begin{align*}
  s_\varepsilon(z)-s^*_\varepsilon(z) 
  \le C(|z^*|^2-|\tz|^2)^{\alpha/2}
  \le C\varepsilon^\alpha z_d^{\alpha}.
\end{align*}
This ends the proof.
\end{proof}

The following lemma is standard, for a detailed proof see e.g. Lemma 5.1 in \cite{MR2114264}.
\begin{lemma}
For $x\in \Rd$ we let $\delta_{\Sd}(x)=\inf\{|y-x|: y\in \Sd\}$.
If $0<\rho<1$, then there is a constant $C_\rho$ (depending on $\rho$) such that
  \begin{align}\label{std}
    \int_{\Sd} |x-y|^{-d+\rho} d\sigma(y)\le C_{\rho} \delta_{\Sd}^{\rho-1}(x),\quad x\in \Rd.
  \end{align}
\end{lemma}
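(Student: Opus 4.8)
The final statement is the standard integral estimate $\int_{\Sd} |x-y|^{-d+\rho}\, d\sigma(y)\le C_\rho\, \delta_{\Sd}^{\rho-1}(x)$ for $0<\rho<1$, which the paper itself attributes to Lemma 5.1 in \cite{MR2114264}. The plan is to prove it directly by a dyadic decomposition of the sphere around the point of $\Sd$ nearest to $x$.

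\medskip

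First I would reduce to the relevant geometry. Write $\delta=\delta_{\Sd}(x)$. If $\delta\ge 1/2$ the claim is trivial: $|x-y|\ge \delta$ uniformly over $y\in\Sd$, hence the integral is at most $\delta^{\rho-1}\cdot\delta^{-d+\rho+1}\sigma(\Sd)\le C\delta^{\rho-1}$ after absorbing the bounded factor $\delta^{-d+\rho+1}$ into the constant (since $\delta$ ranges over a bounded set here, but unboundedly large $\delta$ also only helps, as $|x-y|\ge\delta$ still). So assume $\delta<1/2$ and let $x_0\in\Sd$ realize the distance, so $|x-x_0|=\delta$ and $|x-y|\ge\delta$ for all $y\in\Sd$. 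By the triangle inequality, for $y\in\Sd$ with $|x_0-y|=t$ we have $\delta\le |x-y|\le \delta+t$, and in fact $|x-y|\ge \max(\delta, c\,t)$ for a dimensional constant $c$ (this uses that $x$ lies on the normal line through $x_0$, so the component of $y-x$ tangent to the sphere at $x_0$ has length comparable to $t$ for $t$ bounded; a clean way is $|x-y|^2\ge |x-x_0|^2 + $ tangential part, or simply $|x-y|\ge |x_0-y|-|x-x_0|$ combined with the case split below).

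\medskip

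Next I would split the sphere into annular pieces $A_0=\{y\in\Sd: |x_0-y|\le \delta\}$ and $A_k=\{y\in\Sd: 2^{k-1}\delta< |x_0-y|\le 2^k\delta\}$ for $k=1,\dots,N$, where $N$ is the least integer with $2^N\delta\ge 2$ (diameter of $\Sd$), so $N\le C+\log_2(1/\delta)$. On $A_0$ we have $|x-y|\ge\delta$ and $\sigma(A_0)\le C\delta^{d-1}$ (surface measure of a cap of radius $\delta$ in $\Sd$), so the contribution is at most $C\delta^{d-1}\delta^{-d+\rho}=C\delta^{\rho-1}$. On $A_k$ with $k\ge 1$ we have $|x-y|\ge c\,2^{k-1}\delta$ — here is where the normal-line position of $x$ matters, and the cleanest justification is: either use $|x-y|\ge\delta$ when $2^k\delta$ is comparable to $\delta$, or note $|x-y|\ge |x_0-y|-\delta\ge 2^{k-1}\delta-\delta\ge 2^{k-2}\delta$ for $k\ge 3$, handling $k=1,2$ by the bound $|x-y|\ge\delta\ge 2^{k-2}\delta$. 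Also $\sigma(A_k)\le C(2^k\delta)^{d-1}$. Hence the contribution of $A_k$ is at most $C(2^k\delta)^{d-1}(2^k\delta)^{-d+\rho}=C(2^k\delta)^{\rho-1}=C2^{k(\rho-1)}\delta^{\rho-1}$. Summing over $k\ge 0$, since $\rho-1<0$ the geometric series $\sum_{k\ge 0} 2^{k(\rho-1)}$ converges to a constant depending only on $\rho$, giving the total bound $C_\rho\,\delta^{\rho-1}$.

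\medskip

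The main (only) subtlety is the lower bound $|x-y|\gtrsim |x_0-y|$ on the outer annuli, i.e. making precise that $x$ sitting on the inward or outward normal at the nearest point $x_0$ forces $|x-y|$ to be comparable to the geodesic (or chordal) distance from $y$ to $x_0$ once that distance dominates $\delta$; I would dispatch this by the elementary triangle-inequality case split above rather than any curvature computation, since for $k$ large $|x_0-y|$ already exceeds $2\delta$ and $|x-y|\ge |x_0-y|-\delta\ge \tfrac12|x_0-y|$, while for the finitely many small $k$ the trivial bound $|x-y|\ge\delta$ suffices and only costs another constant factor. Everything else is the routine dyadic summation, and the convergence of the geometric series is exactly where the hypothesis $\rho<1$ is used (and $\rho>0$ guarantees integrability on the innermost cap $A_0$ is not even needed since there we use $|x-y|\ge\delta$, but $\rho>0$ keeps $\delta^{\rho-1}$ from being worse-behaved and is implicitly needed for the statement to be the natural one).
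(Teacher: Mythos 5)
Your argument is correct. Note first that the paper does not prove this lemma at all---it simply labels it standard and cites Lemma 5.1 of the Michalik--Ryznar reference---so there is no in-paper proof to compare against; your dyadic decomposition around the nearest point $x_0\in\Sd$ is exactly the standard way to establish it, and it is sound. The two key ingredients are both handled properly: the cap measure bound $\sigma(A_k)\le C(2^k\delta)^{d-1}$, and the lower bound $|x-y|\ge 2^{k-2}\delta$ on $A_k$, which you correctly obtain from the triangle inequality $|x-y|\ge|x_0-y|-\delta$ for the outer annuli and from the universal bound $|x-y|\ge\delta$ for the innermost ones (no curvature argument is needed, as you say). The resulting geometric series $\sum_k 2^{k(\rho-1)}$ converges precisely because $\rho<1$, and the total number of annuli is finite since $\diam(\Sd)=2$.

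One small slip worth fixing: in the case $\delta\ge 1/2$ you factor the trivial bound as $\delta^{\rho-1}\cdot\delta^{-d+\rho+1}\sigma(\Sd)$, but $\delta^{-d+\rho}=\delta^{\rho-1}\cdot\delta^{1-d}$; the correct residual factor is $\delta^{1-d}$, which is indeed bounded by $2^{d-1}$ for $\delta\ge1/2$ (using $d\ge2$), so the conclusion stands. The parenthetical musings about where $\rho>0$ enters could simply be dropped: the argument as written only uses $\rho<1$, and the hypothesis $\rho>0$ is inherited from the statement rather than needed for your proof.
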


In $\Rdd$, consider coordinates $\tz=(r,\theta)$, where $\theta\in \Sdd$ and $ r=|\tz|$. 
Let $S(0,R)=\{\tz\in\Rdd : |\widetilde z|=R\}=R\Sdd$ (cf. \autoref{dzzsb}). Let $\widetilde\sigma$ denote the $(d-2)$-dimensional Hausdorff measure.
\begin{corollary}\label{offcenter}
Let $\tx\in \Pi_\ve\cap F$. 
For $\rho<1$ we have 
  \begin{align*}
  \int_{S(0,r)} |\widetilde x-\widetilde y|^{-d+1+\rho} d\widetilde\sigma(\widetilde y)
  \le 
  C_\rho |r-|\widetilde x||^{-1+\rho}, \qquad r>0.
  \end{align*}
\end{corollary}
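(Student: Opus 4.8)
The plan is to deduce \autoref{offcenter} from the version of the standard estimate \autoref{std} in dimension $d-1$, together with a scaling argument. Throughout I take $0<\rho<1$ (as in \autoref{std}), I note that the asserted inequality is trivial when $r=|\widetilde x|$ since then its right-hand side equals $+\infty$, and I observe that the argument below uses nothing about $\widetilde x$ beyond $\widetilde x\in\Rdd$.

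First I would pass to the unit sphere via the substitution $\widetilde y=r\theta$ with $\theta\in\Sdd$, under which $d\widetilde\sigma(\widetilde y)=r^{d-2}\,d\widetilde\sigma(\theta)$ and $|\widetilde x-r\theta|=r\,|\widetilde x/r-\theta|$. Using the homogeneity $|\widetilde x-r\theta|^{-d+1+\rho}=r^{-d+1+\rho}\,|\widetilde x/r-\theta|^{-d+1+\rho}$ and collecting the powers of $r$ (namely $r^{d-2}\cdot r^{-d+1+\rho}=r^{\rho-1}$), this yields
\begin{align*}
  \int_{S(0,r)}|\widetilde x-\widetilde y|^{-d+1+\rho}\,d\widetilde\sigma(\widetilde y)
  =r^{\rho-1}\int_{\Sdd}\big|\widetilde x/r-\theta\big|^{-(d-1)+\rho}\,d\widetilde\sigma(\theta).
\end{align*}

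Next I would apply \autoref{std} in dimension $d-1$ to the point $\widetilde x/r\in\Rdd$; since $\delta_{\Sdd}(\widetilde x/r)=\big|\,|\widetilde x|/r-1\,\big|$, the integral on the right is at most $C_\rho\,\big|\,|\widetilde x|/r-1\,\big|^{\rho-1}=C_\rho\,r^{1-\rho}\,\big|\,|\widetilde x|-r\,\big|^{\rho-1}$. Multiplying by $r^{\rho-1}$ cancels the leftover power of $r$ and produces exactly $C_\rho\,\big|r-|\widetilde x|\big|^{-1+\rho}$, which is the claim. There is essentially no obstacle here beyond bookkeeping of the scaling exponents; the one point worth a remark is that \autoref{std} is used in dimension $d-1$, where for $d=2$ the set $\Sdd=\mathbb{S}^0$ degenerates to the two-point set $\{-1,1\}$ with $\widetilde\sigma$ the counting measure --- but the cited standard lemma, and its proof, remain valid in this case as well.
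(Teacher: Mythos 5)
Your proof is correct and is exactly the argument the paper intends: the paper's own proof consists of the single sentence that the corollary ``follows from a simple change of variable in \eqref{std}'', and your scaling substitution $\widetilde y=r\theta$ together with the $(d-1)$-dimensional version of \eqref{std} supplies precisely the missing bookkeeping. The remark about the degenerate case $d=2$ is a sensible extra check, not a deviation from the paper's route.
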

\begin{proof}
This follows from a simple change of variable in \eqref{std}.
\end{proof}
Now, set $\delta=(2-\alpha)/8$ and
 define $D=\{z\in V : 8\varepsilon z_d^{2(1-\delta)} \le \varepsilon-|\widetilde z|\}$. Assume $z\in D$. Since $|z_d|\le 1/2$, the second bound from \eqref{dizzs} yields $|z^*|\le 2|\tz| \le 2\ve$. 
This can be improved to $|z^*|\le \ve$, because if 
$|z^*|> \varepsilon$, then by \autoref{dizzs}
\begin{align*}
8\varepsilon z_d^{2-2\delta}\le \varepsilon-|\widetilde z|\le |z^*|-|\widetilde z|\le\sqrt{2}z_d^2|z^*|\le 2\sqrt{2}z_d^2\varepsilon,
\end{align*}
giving a contradiction with $z_d\le 1/2$. 
Hence, in particular, $D\subset L_\ve$.
Furthermore, by \eqref{dizzs} and the definition of $D$,
\begin{align*}
  \varepsilon-|z^*|&=(\varepsilon-|\widetilde z|)-(|z^*|-|\widetilde z|)\ge(\varepsilon-|\widetilde z|)/2+(\varepsilon-|\widetilde z|)/2-\sqrt{2}\varepsilon z_d^2\ge
  \\&\ge(\varepsilon-|\widetilde z|)/2.
\end{align*}
Using the first bound from \autoref{hollip}, \autoref{dizzsd} and the above inequality,
\begin{align*}
  \int_{D} [s_\varepsilon(z)&-s^*_\varepsilon(z)]|x-z|^{-d-\alpha}dz
  \\&\le
  C\int_{D} (|z^*|^2-|\widetilde z|^2) (\varepsilon^2-|z^*|^2)^{\alpha/2-1}|x-z|^{-d-\alpha}dz
  \\&\le
  C\varepsilon^{1+\alpha/2}\int_{ D} z_d^2 (\varepsilon-|\widetilde z|)^{\alpha/2-1}|x-z|^{-d-\alpha}dz
  \\&\le
  C\varepsilon^{\alpha}\int_{V } z_d^{\alpha+(2-\alpha)\delta}|x-z|^{-d-\alpha}dz
  \\&\le
  C\varepsilon^{\alpha}\int_{V } |x-z|^{-d+(2-\alpha)\delta}dz<C\varepsilon^\alpha.
\end{align*}
Hence we need to consider integral over $V\setminus D$.
For a fixed $x\in F\cap L_\ve$ let $A=\{z\in V: ||\tx|-|\tz||^{1-\delta}\le z_d\}$.

Using \autoref{szd} and the inequality $|x-z|\ge z_d$  we get
\begin{align*}
  \int_{A} \left[s_\varepsilon(z)-s^*_\varepsilon(z)\right] |x-z|^{-d-\alpha}dz&\le
    C\varepsilon^{\alpha}\int_{A} z_d^{\alpha}|\widetilde x-\widetilde z|^{-d+3/2}|x-z|^{-3/2-\alpha}dz
  \\&\le
  C\varepsilon^{\alpha}\int_{A} z_d^{\alpha}z_d^{-3/2-\alpha}|\widetilde x-\widetilde z|^{-d+3/2}dz
\end{align*}
In cylindrical coordinates, this can be rewritten as
\[
  C\varepsilon^{\alpha}\int_0^\ve 
  \left(\int_{S(0,r)}|\widetilde x-\widetilde z|^{-(d-1)+1/2}d\widetilde\sigma(\widetilde z)
  \int_{z_d\ge||\tx|-r|^{1-\delta}}z_d^{-3/2}dz_d\right) dr.
\]
By \autoref{offcenter}, this is bounded by
\[
  C\varepsilon^{\alpha}\int_0^\ve ||\tx|-r|^{-1/2}||\tx|-r|^{-(1-\delta)/2} dr
  \le
  C\varepsilon^{\alpha}\int_0^1 ||\tx|-r|^{-1+\delta/2}dr<C\varepsilon^\alpha. 
\]
Now let $B=\{z\in V: z_d\le |x-z|^{1+\delta}\}$. \autoref{szd} gives 
\begin{align*}
  \int_{ B} &[s_\varepsilon(z)-s^*_\varepsilon(z)]|x-z|^{-d-\alpha}dz
  \\&\le
  C\varepsilon^{\alpha}\int_{B} z_d^\alpha |x-z|^{-d-\alpha}dz
  \\&\le
  C\varepsilon^{\alpha}\int_{B(x,1)} |x-z|^{\alpha(1+\delta)} |x-z|^{-d-\alpha}dz\le C\varepsilon^\alpha.
\end{align*}
	
We still need an estimate on $E=V\cap (A\cup B\cup D)^c$, that is
\begin{align}\label{hardpart}
  E = \{z\in V : \varepsilon-|\tz| < 8\varepsilon z_d^{2-2\delta}, \, |x-z|^{1+\delta} <  z_d < ||\tx|-|\tz||^{1-\delta}\}.
\end{align}
First, we establish some geometric properties for $z\in E$.
Suppose $|\widetilde z|<|\widetilde x|$. Then 
\begin{align*}
  \varepsilon-|\widetilde z|\le 8\varepsilon z_d^{2-2\delta}\le 8\varepsilon(|\widetilde x|-|\widetilde z|)^{2(1-\delta)^2}\le 8\varepsilon(\varepsilon-|\widetilde z|)^{2(1-\delta)^2}.
\end{align*}
Since $\varepsilon<1/8$, $\varepsilon-|\widetilde z|\le \varepsilon<1$ and
$\delta< 1/4< 1-1/\sqrt{2}$, we get a contradiction. Therefore 
\begin{align*}
  |\tx|\le|\tz|\le \varepsilon \text{ on $E$}.
\end{align*}
Let $a=\varepsilon-|\widetilde x|$ and $e=|\widetilde z|-|\widetilde x|\ge0$. Note that $e\le |x-z|\le z_d^{1/(1+\delta)} \le z_d^{1-\delta}$. By assuption, we have $\delta<1/3$. 
Since $z_d^2+|\tx-\tz|^2 = |x-z|^2$ and $e\le |\tx-\tz|$, we get $e^2+z_d^2\le |x-z|^2$. Hence
\begin{align*}
  |x-z|^2-a^2&\ge e^2+z_d^2-(\varepsilon-|\widetilde z|+e)^2
  \\ &=
  z_d^2-(\varepsilon-|\widetilde z|)^2-2(\varepsilon-|\widetilde z|)e
  \\&\ge z_d^2-64\varepsilon^2 z_d^{4-4\delta}-16\varepsilon z_d^{2-2\delta}h^{1-\delta}
  \\&= z_d^2\left(1-64\varepsilon^2 z_d^{2-4\delta}-16\varepsilon z_d^{1-3\delta}\right)
  \\&\ge z_d^2\left(1-64\varepsilon^2-16\varepsilon\right).
\end{align*}
The expression in the parenthesis is positive for $\varepsilon<1/20$, giving $|x-z|\ge a$. 
Note also that
\begin{align*} 
  z_d&\ge |x-z|^{1+\delta}\ge a^{1+\delta}\\
  z_d&\le (|\widetilde z|-|\widetilde x|)^{1-\delta}=[(\varepsilon-|\widetilde x|)-(\varepsilon-|\widetilde z|)]^{1-\delta}\le a^{1-\delta}.
\end{align*}
Since $E$ is not a subset of $L_\varepsilon$, \autoref{hollip} is not applicable. We have the following estimate instead
\begin{align*}
  s_\varepsilon(z)-s_\varepsilon^*(z)
  \le s_\varepsilon(z)
  \le C \varepsilon^{\alpha/2}(\varepsilon-|\widetilde z|)(\varepsilon-|\widetilde z|)^{\alpha/2-1}
  \le C\varepsilon^{\alpha/2}\varepsilon z_d^{2-2\delta}(\varepsilon-|\widetilde z|)^{\alpha/2-1}.
\end{align*}
Note also that 
$|x-z|^{-d-\alpha} \le |x-z|^{-d} z_d^{-\alpha}
\le |x-z|^{-d+1+\alpha/2} z_d^{-1-\alpha/2-\alpha}$. Since $-d+1+\alpha/2<0$, we get
\begin{align*}
|x-z|^{-d-\alpha} \le |\tx-\tz|^{-d+1+\alpha/2} z_d^{-1-\alpha/2-\alpha}.
\end{align*}
By asumption, $\delta+\alpha<2$. Using cylindrical coordinates, we obtain, 
\begin{align*}
  \int_{E}& \left[s_\varepsilon(z)-s_\varepsilon^*(z)\right]|x-z|^{-d-\alpha}dz
  \\&\le C\varepsilon^{\alpha/2+1}
  \int_{a^{1+\delta}}^{a^{1-\delta}} z_d^{1-\alpha/2-\alpha-2\delta}\,dz_d \int_{\varepsilon\ge |\widetilde z|\ge |\widetilde x|} |\widetilde x-\widetilde z|^{-d+1+\alpha/2}(\varepsilon-|\widetilde z|)^{\alpha/2-1}\,d\tz 
  \\&\le
  C\varepsilon^{\alpha/2+1}a^{-(1+\delta)(\alpha+2\delta)/2}
  \int_0^{a^{1-\delta}} z_d^{1-\alpha/2-(\alpha+2\delta)/2} \,dz_d
  \times\\&\qquad\qquad\qquad\qquad\times
   \int_{|\tx|}^\varepsilon (\varepsilon-r)^{\alpha/2-1}
   \int_{S(0,r)} |\widetilde x-\widetilde z|^{-(d-1)+\alpha/2}\,d\sigma(\widetilde z) dr
  \\&\le
  C\varepsilon^{\alpha/2+1}a^{-(1+\delta)(\alpha+2\delta)/2}a^{(1-\delta)(2-\alpha/2-(\alpha+2\delta)/2)} 
  \int_{|\tx|}^\varepsilon (\varepsilon-r)^{\alpha/2-1}(r-|\tx|)^{\alpha/2-1} dr
  \\&=
  C\varepsilon^{\alpha/2+1}a^{-\alpha-2\delta+(1-\delta)(2-\alpha/2)} 
  (\varepsilon-|\tx|)^{\alpha-1}
  \int_0^1 r^{\alpha/2-1}(1-r)^{\alpha/2-1} dr
  \\&=
  C\varepsilon^{\alpha/2+1}
  a^{-\alpha-2\delta+(1-\delta)(2-\alpha/2)+ \alpha-1} 
  = C\varepsilon^{\alpha/2+1}a^{1-\alpha/2-\delta(4-\alpha/2)}.
\end{align*}

Since $\delta=(2-\alpha)/8$, we get $a$ in a positive power. 
Consequently, the integral decays slightly faster than $\varepsilon^{\alpha}$. 
This completes the proof of \autoref{ohsss}.

\subsection{Proof of \autoref{bound3}}
Let $t\in [-1,1)$ and $r\in [0,1]$.
By \autoref{eq:ug01},
\begin{equation}\label{eq:ru}
  u_\lambda(t)-u_0(t) 
  = 
  \int_0^1 \left( 1-r^{\lambda} \right)\left( r^{\alpha-\lambda-1}-r^{d-1} \right)
  \left(r^2-2rt +1\right)^{-(d+\alpha)/2}dr.
\end{equation}
Let $g(r)=r^2-2rt+1=(r-t)^2+1-t^2$ and  $f(r)=g(r)^{-(d+\alpha)/2}$. 
To estimate $f$ we observe that the extrema of $g(r)$ on $[0,1]$ are either $g(0)=1$, 
or $g(1)=2(1-t)$ or $g(t)=1-t^2$.
We first prove the lower bound in the statement of the lemma.
Note that {$f(0)=1$}.
We have $f\rq{}(r)=-(d+\alpha)(r-t)g(r)^{-(d+\alpha)/2-1}$, hence $f\rq{}(r)\ge -2(d+\alpha)$ if $-1\le t <0$, and 
$f\rq{}(r)\ge-(d+\alpha) (1-t^2)^{-(d+\alpha)/2-1} \ge-(d+\alpha) (4/3)^{(d+\alpha)/2+1}$ if $0\le t<1/2$. 
If $1/2\le t<1$, then $g(r)=r(r-2t)+1\leq 1$ and $f(r)\geq 1$.
Summarizing, in each case we have $f(r)\geq 1-cr$. Therefore,
\begin{align}
 &  u_\lambda(t)-u_0(t) \label{eq:lbe}
 \ge 
  \int_0^1 \left( 1-r^{\lambda} \right)\left( r^{\alpha-\lambda-1}-r^{d-1} \right)(1-cr)dr \\ 
  &= \frac{1}{\alpha-\lambda} - \frac1d -\frac1\alpha+\frac1{d+\lambda}
  -c\left(\frac{1}{\alpha-\lambda+1} - \frac1{d+1} -\frac1{\alpha+1}+\frac1{d+\lambda+1}\right),\nonumber
  \end{align}    
and the lower bound  in the statement of the lemma follows.
We now prove the upper bound.
If $t\le 3/4$, then $g(r)\ge 1-(3/4)^2$, hence $|f'(r)|\le c$ and $f(r)\le 1+cr$.
This and \autoref{eq:ru} yield $u_\lambda(t)-u_0(t) \leq \frac{1}{\alpha-\lambda}+C$, cf.~\autoref{eq:lbe}. 

We denote 
$s=\sqrt{2(1-t)}$,
so that 
$s\in(0,2]$.
If $\theta,\eta\in \Sd$, $t=\theta\cdot\eta$ and $\gamma$ is the angle between $\theta$ and $\eta$,
then $t=\cos \gamma$ and 
\begin{align}\label{eq:st}
s=2\sin(\gamma/2)=|\theta-\eta|.
\end{align}

We 
need to consider $t>3/4$, or 
$s^2<1/2$.
Let $x\in(0,1/2)$.
By \autoref{eq:ru},
  \begin{align*}
    u_\lambda(t)-u_0(t)
    &=
    \left(\int_0^x+\int_x^1\right) \left( 1-r^{\lambda} \right)\left( r^{\alpha-\lambda-1}-r^{d-1} \right)
    \left( (r-1)^2+rs^2\right)^{-\frac{d+\alpha}2}dr \\
    &= \mathrm{I}+{\rm I\!I}.
  \end{align*}
We have 
\begin{align*}
    {\rm I} &\le
  \int_{0}^{x} r^{\alpha-\lambda-1} (1-r)^{-d-\alpha}dr
  \le
  (1-x)^{-d-\alpha}\int_{0}^{1} r^{\alpha-\lambda-1} dr
  \\&=
  \frac{(1-x)^{-d-\alpha}}{\alpha-\lambda}
  \le  \frac{1+Cx}{\alpha-\lambda}.
\end{align*}
To estimate ${\rm I\!I}$, we denote $f_a^s(v)=(1+vs)^{a}$ and \[\Delta f_a^s(v)=\frac{f_a^0(v)-f_a^s(v)}s=\frac{1-f_a^s(v)}s,\]
where $a>0$, $s>0$ and $-1/s\le v\le 0$. 
For $0<y<1$ and $a>0$ we have
  \begin{align*}
    (1-y)^a\ge (1-y)^{a\vee 1}\ge 1-(a\vee1)y.
  \end{align*}
  Therefore $  (1-(1-y)^a)/y \le a\vee 1$.
  Putting $y=|v|s$ we get
  \begin{align}\label{eq:odf}
    \Delta f_a^s(v)&\le (a\vee1)|v|.
  \end{align}
Substituting $r-1=v s$ in the integral defining ${\rm I\!I}$, we get
  \begin{align*}
        {\rm I\!I} &\le x^{\alpha-\lambda-1}\int_{x}^1 \left( 1-r^{\lambda} \right)\left(1-r^{d-\alpha+\lambda} \right)\left( (r-1)^2+rs^2\right)^{-\frac{d+\alpha}2}dr
    \\&=x^{\alpha-\lambda-1}s^{-d-\alpha+1}\int_{(x-1)/s}^0 \left( 1-f_\lambda^s(v) \right)\left( 1-f_{d-\alpha+\lambda}^s(v) \right)\left( v^2+(1+vs)\right)^{-\frac{d+\alpha}2}dv
    \\&=x^{\alpha-\lambda-1}s^{-d-\alpha+3}\int_{(x-1)/s}^0 \Delta f_\lambda^s(v)\Delta f_{d-\alpha+\lambda}^s(v)\left( v^2+vs+1\right)^{-\frac{d+\alpha}2}dv.
  \end{align*}
  Note that $-1\le x-1\le vs\le 0$. By \autoref{eq:odf} we have
  \begin{align*}
    \Delta f_\lambda^s(v)&\le (\lambda\vee1)|v|\le 2|v|,\\
    \Delta f_{d-\alpha+\lambda}^s(v)&\le ( (d-\alpha+\lambda)\vee1)|v|\le d|v|.
  \end{align*}
 Recall that $s^2<1/2$. By the above and a change of variables it follows 	that
 \begin{align*}
     {\rm I\!I} &\le C s^{-d-\alpha+3}x^{\alpha-\lambda-1}\int_{(x-1)/s}^0   
      \frac{v^2}{(v^2+vs+1)^{(d+\alpha)/2}}
    \\&\le Cs^{-d-\alpha+3}x^{\alpha-\lambda-1}
    \left(\int_{-s}^0  \frac{v^2}{(v^2+1/2)^{(d+\alpha)/2}}dv
    +\int_{-1/s}^{-s}  \frac{(-v)\sqrt{v^2+vs+1}}{(v^2+vs+1)^{(d+\alpha)/2}}dv\right)
    \\&\le 
    C s^{-d-\alpha+3}x^{\alpha-\lambda-1}
    \left(\int_{-1}^0 \frac{v^2}{(v^2+1/2)^{(d+\alpha)/2}}dv
    +\int_{-1/s}^{-s} \frac{-(2v+s)}{(v^2+vs+1)^{(d+\alpha-1)/2}}dv\right)
    \\&\le 
    C s^{-d-\alpha+3}x^{\alpha-\lambda-1}
    \left(1+ \int_{1}^{1/s^2}  \frac{du}{u^{(d+\alpha-1)/2}}\right)
    \\&\le 
    Cx^{\alpha-\lambda-1}(1\vee s^{-d-\alpha+3}),
 \end{align*}
provided $d+\alpha\neq 3$, and ${\rm I\!I}\le -x^{\alpha-\lambda-1}\log s$ if  $d+\alpha= 3$.
Let $x=(\alpha-\lambda)^\delta$.
Since $(\alpha-\lambda)^{\delta(\alpha-\lambda)}\le 1$, the upper bound follows.
The proof of \autoref{bound3} is complete.

\subsection{Proof of \autoref{lem:intexit}}

We consider transformation 
$$
W(y)=\frac{2}{|y|^2}y-\one, \qquad y\in \Rdz.
$$
Note that $W^{-1}(y)=T\left(\frac12 (\one+y)\right)$, $y\neq -\one$.
For $y=(w,1)\in F$, we have 
$$W(y)=\left[\begin{array}{c}
2(1+|w|^2)^{-1}w\\
-1+2(1+|w|^2)^{-1}
\end{array}\right],\quad
\frac{\partial W}{\partial w}=\left[
\begin{array}{c}
2(1+|w|^{-1}I_{d-1}-4 (1+|w|^2)^{-2}ww^T\\
-4(1+|w|^2)^{-2}w^T
\end{array}
\right],
$$
where $w\in \R^{d-1}$ is considered as a column vector, $w^T$ is the transpose of $w$ and
$I_{d-1}$ is the $(d-1)\times (d-1)$ identity matrix. By \cite[Proposition~12.13]{MR2245472} the surface measure on $W(F)=\Sd\setminus\{-\one\}$ is given by
\begin{align}\label{eq:sJ}
\sigma(W(dw))=\left[\det \left (\frac{\partial W}{\partial w}^T\frac{\partial W}{\partial w}\right)\right]^{1/2}dw.
\end{align}
We have 
$$
\frac{\partial W}{\partial w}^T\frac{\partial W}{\partial w}=4(1+|w|^2)^{-2}I_{d-1}+16|w|^2(1+|w|^2)^{-4}ww^T.
$$
By the matrix determinant lemma (see, e.g, \cite[Corollary~18.1.3]{MR1467237}), 
\begin{align*}
&\det \left (\frac{\partial W}{\partial w}^T\frac{\partial W}{\partial w}\right) =
 \left(4(1+|w|^2)^{-2}\right)^{d-1}
\left(1+4|w|^4(1+|w|^2)^{-2}\right),
\end{align*}
therefore $\sigma(W(dw))=2^{d-1}[1+\BigO(\varepsilon^2)]dw$ if $|w|<\varepsilon$.
We have $W(F\cap \Pi_\varepsilon)=B_\Theta$ (see \autoref{eoT} and \autoref{fig:1}).
In fact $\phi(W(y))=2^\alpha s^*_\varepsilon(y)=2^\alpha s_\varepsilon(y)$ for $y\in F\cap \Pi_\varepsilon$, cf. \autoref{eq:idu}.
Thus,  
\begin{align*}
    \int_{B_\Theta} \phi(\theta)\sigma(d\theta)&=
    2^{d-1+\alpha}\left[1+\BigO(\varepsilon^2)\right]\int_{\R^{d-1}\cap \{|w|<\varepsilon\}} \cda(\varepsilon^2-|w|^2)^{\alpha/2} dw
 \\&=2^{d-1+\alpha}{\omega_{d-1}}\cda \left[1+\BigO(\varepsilon^2)\right]\int_0^\varepsilon  (\varepsilon^2-r^2)^{\alpha/2}r^{d-2}dr
    \\&=2^{d-1+\alpha}\omega_{d-1}\cda \varepsilon^{\alpha+d-1}\left[1+\BigO(\varepsilon^2)\right]\int_0^1 (1-r^2)^{\alpha/2}r^{d-2}dr
    \\&=2^{d+\alpha-2}\omega_{d-1}\cda B\left(1+\frac{\alpha}{2},\frac{d-1}{2}\right)\varepsilon^{\alpha+d-1}\left[1+\BigO(\varepsilon^2)\right].
  \end{align*}
We finish the proof of \autoref{lem:intexit}
by recalling that $\varepsilon=\Theta/2+\BigO(\Theta^2)$, cf. \eqref{eoT}.

\subsection{Proof of \autoref{lem:Rlgd}}
Recall that $0<\varepsilon<1/20$
and $\Theta\le\pi/30$, in particular 
$|\theta-\eta|\le 1$
if
$\theta\in B_\Theta$, cf. \autoref{eq:st}.
By \autoref{eq:rul}, the definition of $\phi$, \autoref{bound3} and \autoref{eoT} we have
\begin{align*}
&\Ad\frac{1-c(\alpha-\beta)}{\alpha-\lambda}\int_{B_\Theta}\phi(\theta)\sigma(d\theta)\le 
    R_\lambda\phi(\eta)
    \\&\le 
    \Ad\frac{1+C(\alpha-\lambda)^\delta}{\alpha-\lambda}\int_{B_\Theta}\phi(\theta)\sigma(d\theta)
    +C \frac{\Theta^\alpha}{(\alpha-\lambda)^{\delta}}
        \int_{B_\Theta} \left(1\vee |\theta-\eta|^{-(d+\alpha-3)}\right) \sigma(d\theta).
    \end{align*}
The lower bound in \autoref{lem:Rlgd} follows immediately from \autoref{lem:intexit}.

To prove the upper bound  we first assume that
$d+\alpha\le 3$. Then $d=2$, $\alpha\le 1$ and $ \Theta^{\alpha}\le\omega(\Theta)$. 
Since 
$\displaystyle\int_{B_\Theta}  \sigma(d\theta) \asymp \Theta^{d-1}=\Theta$, 
by \autoref{lem:intexit}
we obtain, 
\[ R_\lambda\phi(\eta) \le 
    \Ad \widetilde C_{d,\alpha}\Theta^{d-1+\alpha}\left[1+\BigO(\Theta^2)\right]  
    \frac{1+C(\alpha-\lambda)^\delta}{\alpha-\lambda} 
    +C \frac{\Theta^{1+\alpha}}{(\alpha-\lambda)^{\delta}},
 \]
as needed.
We now  assume that $d+\alpha>3$.
It is not difficult to see that
\begin{align*}
\int_{B_\Theta}  |\eta-\theta|^{-(d+\alpha-3)} \sigma(d\theta) \asymp \Theta^{2-\alpha}.
\end{align*}
Consequently, 
\[ R_\lambda\phi(\eta) \le    
   \Ad \widetilde C_{d,\alpha}\Theta^{d-1+\alpha}\left[1+\BigO(\Theta^2)\right]
        \frac{1+C(\alpha-\lambda)^\delta}{\alpha-\lambda}
        +C \frac{\Theta^{2}}{(\alpha-\lambda)^{\delta}}.
\]        
But $\Theta\le \omega(\Theta)$, which yields \autoref{radialupper} in this case, too.
The proof of \autoref{lem:Rlgd} 
is complete. In fact we proved a stronger estimate for $\alpha=1$.

\subsection{Proof of \autoref{cor:slit}}
The following is a folklore connection between harmonic functions of $\Delta^{1/2}$ and $\Delta$.
\begin{lemma}\label{lem:he}
Let $d\in \{1,2,\ldots\}$ and
$$
P_t(x)=\frac{2}{\omega_{d+1}}\frac{t}{(|x|^2+t^2)^{(d+1)/2}}, \qquad t>0, \;x\in \R^d.
$$
If function $\Phi$ on $\R^n$ is harmonic for $\Delta^{1/2}$ in a open set $E\subset \R^d$, 
and for $x\in \Rd$ we let
\begin{align*}
U(x,t)=\left\{
\begin{array}{lll}
P_t*\Phi(x), & \mbox{ if} & t>0 ,\\
\Phi(x),& \mbox{ if} & t=0 ,\\
P_{-t}*\Phi(x),& \mbox{ if} & t<0 ,
\end{array}\right.
\end{align*}
then $U$ is harmonic for $\Delta$ in $D=\{(x,t)\in \R^{d+1}:t\neq 0 \mbox{ or } x \in E\}$.
\end{lemma}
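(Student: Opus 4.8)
The plan is to verify directly that $U$ solves $\Delta U = 0$ on $D$, exploiting that the Poisson kernel $P_t(x)$ for the half-space $\R^{d+1}_+$ is precisely the kernel that represents the semigroup generated by $-(-\Delta_{\R^d})^{1/2} = \Delta^{1/2}$. Concretely, for $t>0$ one has the classical fact that $(x,t)\mapsto P_t*\Phi(x)$ is harmonic in the open upper half-space whenever the convolution makes sense, and symmetrically for $t<0$ in the lower half-space; this is the Laplace equation $\partial_t^2 U + \Delta_x U = 0$ together with the identity $\partial_t (P_t*\Phi) = -(-\Delta_x)^{1/2}(P_t*\Phi)$, i.e. $\partial_t U = \Delta^{1/2}_x U$ for $t>0$. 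So on the region $\{t\ne 0\}$ harmonicity of $U$ for $\Delta$ on $\R^{d+1}$ is automatic and requires only that $\Phi$ satisfy the integrability condition \eqref{eq:dc} in dimension $d$ (which is part of the standing hypothesis needed to even speak of $\Delta^{1/2}\Phi$), so that $P_t*\Phi$ is well defined and smooth in $(x,t)$ for $t\ne 0$.

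First I would set up the Fourier/semigroup picture: writing $\widehat{P_t}(\xi) = e^{-t|\xi|}$ (for $t>0$), one gets $\widehat{U}(\xi,t) = e^{-|t||\xi|}\widehat\Phi(\xi)$, from which $\partial_t^2\widehat U = |\xi|^2 \widehat U = -\Delta_x\widehat U$ away from $t=0$, giving $\Delta U = 0$ on $\{t\ne 0\}$. I would note that since the statement is purely pointwise (via the definition \eqref{eq;deful}), one can alternatively argue by a direct computation that $\Delta_{(x,t)} P_t(x) = 0$ for $t>0$ — the fundamental-solution computation — and differentiate under the integral sign, which is justified by the rapid decay of $P_t$ and its derivatives in $x$ combined with \eqref{eq:dc}; the two approaches are interchangeable and I would present whichever is cleaner in the paper's conventions.

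The only genuine content is at the interface $\{t = 0\}\cap E$. Here I would use that, since $\Phi$ is $\Delta^{1/2}$-harmonic on the open set $E\subset\R^d$, it is smooth on $E$ (by regularity of $\alpha$-harmonic functions, $\alpha = 1$, cf. the references cited in the excerpt), and the boundary behaviour of the Poisson extension is the classical one: $P_t*\Phi(x)\to \Phi(x)$ and, more importantly, $\partial_t\big(P_t*\Phi\big)(x)\to \Delta^{1/2}\Phi(x) = 0$ as $t\downarrow 0$, locally uniformly in $x\in E$. The same holds from below with $t\uparrow 0$ giving limit $-\Delta^{1/2}\Phi(x)=0$. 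Thus $U$ is $C^1$ across $\{t=0\}$ on $E$ with vanishing normal derivative from both sides, and $U(\cdot,0)=\Phi$ is smooth on $E$; so $U$ extends to a function that is harmonic on a neighbourhood of each point of $\{t=0\}\cap E$ — for instance by a reflection/removable-singularity argument: a continuous function harmonic off a hyperplane and $C^1$ across it is harmonic across it (Morera-type / Schwarz reflection in the $t$-variable, using evenness of $U$ in $t$). Hence $\Delta U = 0$ on all of $D$.

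The main obstacle is the interface analysis: justifying that $\partial_t(P_t*\Phi)\to\Delta^{1/2}\Phi$ and not merely that $P_t*\Phi\to\Phi$, and doing so with enough uniformity near points of $E$ to conclude $C^1$-gluing. This requires controlling the tail of $\Phi$ (where $\Phi$ need not vanish, only satisfy \eqref{eq:dc}) uniformly for $x$ in compact subsets of $E$; the standard device is to split $\Phi = \Phi\mathbf{1}_{B} + \Phi\mathbf{1}_{B^c}$ for a ball $B\subset E$, handle the smooth compactly-modified piece by classical Poisson-integral theory, and bound the far piece using $|P_t(x-y)|\le C t (1+|y|)^{-d-1}$ together with \eqref{eq:dc}. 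This is routine but is where the hypotheses on $\Phi$ are actually used, so I would state it carefully; everything else is bookkeeping. Since the lemma is invoked in the proof of \autoref{cor:slit} only for explicit $\Phi = M$ (a bounded, $\tfrac12$-homogeneous Martin kernel on $\R^{d}$, here $d=2$), I would remark that in that application $\Phi$ is in particular locally bounded and satisfies \eqref{eq:dc}, so all the above hypotheses are met.
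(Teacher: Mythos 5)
Your proof is correct and follows essentially the same route as the paper: harmonicity off $\{t=0\}$ is classical, the key observation is that $\partial_t U(x,0)=\Delta^{1/2}\Phi(x)=0$ for $x\in E$, and one concludes by a reflection principle across the hyperplane. The only cosmetic difference is that the paper applies the odd Schwarz reflection to $V=\partial_t U$ and recovers $U$ by integrating in $t$, whereas you reflect $U$ itself via its evenness in $t$ and the vanishing normal derivative; the two arguments are standard and interchangeable.
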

\begin{proof}
$U$ is well-defined because of \eqref{eq:dc}. It is harmonic (for $\Delta$ in $d+1$ variables) on $\R^{d+1}\setminus \{t=0\}$ and continuous on $D$, cf.  \cite[Chapter III]{MR0290095}.
It is well-known and easy to derive directly  
that $\partial U(x,t)/\partial t=\Delta^{1/2}\Phi(x)$ at $t=0$ and $x\in E$ (hint:  $\int_{\Rd}P_t(y)dy=1$). 
Since $\Phi$ is $1/2$-harmonic on $E$, the derivative equals zero at $t=0$ and $x\in E$.
It follows that $V(x,t)=\partial U(x,t)/\partial t$ is continuous in $D$.
By the reflection principle for harmonic functions, $V$ is harmonic in $D$. For $x\in D$ and $t\in \R$ we have
$U(x,t)=U(x,1)+\int_1^t V(x,s)ds$. Thus $U$ is $C^2$ in $D$, and so $\Delta U=0$ on $D$.
\end{proof}
Let  $M$ be the Martin kernel of the cone $\Gamma_\Theta\subset \R^d$ for $\Delta^{1/2}$ and $d\ge 2$.
The above harmonic extension of $M$ to $\R^{d+1}$ is a constant multiple of the Martin kernel (with the pole at infinity) for $V$ and $\Delta$. Indeed, \cite[Corollary 1]{MR608327} asserts that all nonnegative harmonic functions vanishing at $E$ are proportional, see also \cite[Theorem~1.1]{MR2337478}. By Theorem~\ref{mainthm} and a change of variables, $M*P_{kt}(kx)=k^\beta M*P_t(x)$, where $k>0$. We have $\beta=1-B_{d,1}\Theta^{d}+\BigO(\Theta^{d+1}\log \Theta)$ and $$B_{d,1}=\frac{1}{2\pi}\frac{d-1}{d}\frac{\Gamma(\frac{d-1}{2})^2}{\Gamma(\frac{d}{2})^2}.$$ 
Since $B_{2,1}=1/4$,
\autoref{cor:slit} follows. In fact we proved a more general result.

\end{document}